\documentclass[12pt]{amsart}

\usepackage{hyperref}
\usepackage{color}
\usepackage{pinlabel}
\usepackage{graphicx}
\usepackage{xypic}

\usepackage{amsmath,amsfonts,amssymb}

\theoremstyle{plain}
\newtheorem*{theorem*}{Theorem}
\newtheorem*{lemma*}{Lemma}
\newtheorem*{corollary*}{Corollary}
\newtheorem*{corollary-1}{Corollary 1}
\newtheorem*{corollary-2}{Corollary 2}

\newtheorem*{proposition*}{Proposition}
\newtheorem*{proposition-1}{Proposition 1}
\newtheorem*{proposition-2}{Proposition 2}
\newtheorem*{proposition-3}{Proposition 3}

\newtheorem{conjecture*}{Conjecture}
\newtheorem{theorem}{Theorem}[section]
\newtheorem{lemma}[theorem]{Lemma}

\newtheorem{proposition}[theorem]{Proposition}

\theoremstyle{remark}

\newtheorem*{remark}{Remark}

\newtheorem*{claim}{Claim}

\theoremstyle{definition}

\def\scrg{\mathcal{G}}

\def\scry{\mathcal{Y}}

\def\GG{\mathcal{G}}

 \def\sl{\operatorname{\sl}}   \def\Z{\mathbb{Z}} \def\R{\mathbb{R}} \def\C{\mathbb{C}}
\def\N{\mathbb{N}}  \def\l{\lambda} \def\ll{\langle} \def\rr{\rangle}
    \def\bp{\begin{pmatrix}}
\def\sm{\setminus} \def\ep{\end{pmatrix}} \def\bn{\begin{enumerate}} 
   \def\en{\end{enumerate}}
\def\ba{\begin{array}} \def\ea{\end{array}}

 \def\Im{\operatorname{Im}} 
\def\be{\begin{equation}} \def\ee{\end{equation}}

   \def\eps{\epsilon}

    \def\fr12{\frac{1}{2}} \def\z12{\Z[\fr12]}

\def\ol{\overline}

\def\G{\Gamma}

\def\G{\Gamma}

\def\sl{\operatorname{SL}}
\def\psl{\operatorname{PSL}}

\renewcommand\epsilon{\varepsilon}
\DeclareMathAlphabet{\mathbf}{OML}{cmm}{b}{it}

\numberwithin{equation}{section}

\begin{document}

\title{Centralizers in $3$-manifold groups}

\author{Stefan Friedl}
\address{Mathematisches Institut\\ Universit\"at zu K\"oln\\   Germany}
\email{sfriedl@gmail.com}

\begin{abstract}
Using the Geometrization Theorem we prove a result on centralizers in fundamental groups of 3-manifolds.
This result had been obtained by Jaco and Shalen and by Johannson using different techniques.
\end{abstract}

\maketitle

\section{Introduction}

In this paper we will study centralizers in fundamental groups of 3-manifolds. By a 3--manifold we will always mean a compact, orientable, connected, irreducible 3-manifold with empty or toroidal boundary.

Let $\pi$ be a group. The \emph{centralizer} of an element $g\in \pi$ is defined to be the subgroup \[ C_{\pi}(g):=\{h\in \pi \, |\, gh=hg \}.\]
Determining centralizers is an important step towards understanding a group.
The goal of this note is to give a new proof of the following theorem.

\begin{theorem}\label{thm:centgen}
Let $N$ be a  3-manifold. We write $\pi=\pi_1(N)$.
Let $g\in \pi$. If $C_{\pi}(g)$ is non-cyclic, then one of the following holds:
\bn
\item there exists a JSJ torus or a boundary torus $T$ and $h\in \pi$ such that $g\in h\pi_1(T)h^{-1}$ and such that
\[ C_{\pi}(g)=h\pi_1(T)h^{-1},\]
\item there exists a Seifert fibered component $M$ and $h\in \pi$ such that
 $g\in h\pi_1(M)h^{-1}$ and such that
\[ C_{\pi}(g)=hC_{\pi_1(M)}(h^{-1}gh)h^{-1}.\]
\en
\end{theorem}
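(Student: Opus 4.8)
The plan is to realize $\pi$ as acting on the Bass--Serre tree of the JSJ decomposition and to read $C_\pi(g)$ off from that action. By Jaco--Shalen and Johannson, $N$ has a JSJ decomposition along a canonical (possibly empty) finite family of disjoint incompressible tori, and by the Geometrization Theorem each resulting piece is either Seifert fibered or a finite-volume hyperbolic manifold whose cusps are its boundary tori. Hence $\pi$ splits as a finite graph of groups with vertex groups the $\pi_1$ of the pieces and edge groups the $\pi_1(T_e)\cong\mathbb{Z}^2$ of the JSJ tori; let $\mathcal{T}$ denote the associated Bass--Serre tree, on which $\pi$ acts minimally and without inversions, with vertex stabilizers the conjugates of the $\pi_1(M_v)$ and edge stabilizers the conjugates of the $\pi_1(T_e)$. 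We may assume $g\neq 1$: when $g=1$ we have $C_\pi(g)=\pi$, and the conclusion has content only if $N$ is Seifert fibered, where (2) holds with $M=N$, $h=1$.

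Three local features of $\mathcal{T}$ carry the argument. (a) If $M_v$ is hyperbolic then distinct edge stabilizers at $v$ intersect trivially, as distinct cusp subgroups of a cusped Kleinian group do. (b) If $M_v$ is Seifert fibered then distinct edge stabilizers at $v$ intersect exactly in the infinite cyclic fiber subgroup of $\pi_1(M_v)$. (c) On any JSJ torus $T_e$ the fiber slopes induced by the two Seifert pieces on its sides (or by the two sides of a self-glued Seifert piece) span distinct rank-one subgroups of $\pi_1(T_e)\cong\mathbb{Z}^2$ --- this is where canonicity of the JSJ family is \emph{essential}, since matching fiber slopes would let the Seifert fibration cross $T_e$. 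Beyond these I will use only the classical description of centralizers in a finite-volume cusped hyperbolic $3$-manifold group: the centralizer of a nontrivial element is abelian, infinite cyclic if the element is loxodromic and a full cusp subgroup (the $\pi_1$ of a boundary torus, so $\cong\mathbb{Z}^2$) if it is parabolic.

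The heart of the proof is the analysis of the $g$-action on $\mathcal{T}$. If $g$ is hyperbolic on $\mathcal{T}$ with axis $A$, then $C_\pi(g)$ preserves $A$ and the translation direction of $g$, so there is a homomorphism $C_\pi(g)\to\mathbb{Z}$ recording translation length along $A$, with nonzero image; its kernel fixes $A$ pointwise and so lies inside the intersection of three consecutive edge stabilizers along $A$, which is trivial by (a), (b), (c). Then $C_\pi(g)$ is infinite cyclic, contradicting the hypothesis; hence $g$ is elliptic. A short combinatorial lemma now shows that $\mathrm{Fix}(g)\subseteq\mathcal{T}$ is bounded: if some edge of $\mathrm{Fix}(g)$ had both endpoints of valence $\ge 2$ in $\mathrm{Fix}(g)$, then $g$ would lie in the intersection of two edge stabilizers at each endpoint, and (a), (b), (c) would force $g=1$; therefore $\mathrm{Fix}(g)$ is a star (a single vertex with pendant edges), or smaller, hence bounded. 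Since $C_\pi(g)$ permutes the bounded tree $\mathrm{Fix}(g)$ and acts without inversions, it fixes a vertex $v$ of $\mathrm{Fix}(g)$; thus $C_\pi(g)\le\mathrm{Stab}(v)=h\pi_1(M_v)h^{-1}$ and consequently $C_\pi(g)=h\,C_{\pi_1(M_v)}(h^{-1}gh)\,h^{-1}$. If $M_v$ is Seifert fibered this is precisely conclusion (2). If $M_v$ is hyperbolic then $C_{\pi_1(M_v)}(h^{-1}gh)$ is abelian and, being non-cyclic, equals (up to a conjugacy that may be absorbed into $h$) the $\pi_1$ of a boundary torus $S$ of $M_v$; and $S$ is either a JSJ torus of $N$ or a boundary torus of $N$, so we land in conclusion (1).

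The step I expect to be the main obstacle is the combinatorial lemma that $\mathrm{Fix}(g)$ is bounded --- that is, the near-acylindricity of $\mathcal{T}$ encapsulated in (a)--(c). This is the point that genuinely needs the canonical JSJ tori rather than an arbitrary system of incompressible tori: one must know that adjacent Seifert pieces never share a fiber slope and that the cusps of a hyperbolic piece are pairwise disjoint. Granting it, the remainder is bookkeeping on the Bass--Serre tree together with the structure of Kleinian-group centralizers supplied by the Geometrization Theorem.
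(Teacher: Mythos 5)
Your argument is correct in substance, but it reaches the conclusion by a genuinely different route from the paper. The paper never passes to the Bass--Serre tree: it works directly with Serre's normal form for reduced paths in the path group of the JSJ graph of groups, and it factors the proof through two intermediate results --- Theorem \ref{thm:divisible} (no non-trivial element of $\pi_1(N)$ is infinitely divisible) and Theorem \ref{thm:commute} (two commuting elements either generate a cyclic group or lie in a common conjugate of an edge group or of a Seifert vertex group) --- before assembling the centralizer statement element by element. Your dichotomy between elliptic and hyperbolic isometries of the tree replaces both steps at once: the triple-intersection triviality of consecutive edge stabilizers (your (a)--(c), which are exactly the paper's Lemmas \ref{lem:malnormal} and \ref{lem:sfsmalnormal} together with Theorem \ref{thm:adjsfs}) kills the kernel of the translation-length homomorphism in the hyperbolic case, so you never need the divisibility theorem or the "write $x=z^n$ with $z$ indivisible" manoeuvre; and in the elliptic case the boundedness of $\operatorname{Fix}(g)$ lets you pin down the entire centralizer inside one vertex stabilizer in a single stroke, rather than arguing separately for each $h\in C_\pi(g)$ as the paper does. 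What your approach buys is a shorter, more conceptual proof and a clean statement of the underlying acylindricity; what the paper's approach buys is the divisibility theorem and the malnormality of peripheral subgroups as free-standing by-products. One caveat applies equally to both proofs: the key fact that two distinct edge stabilizers at a Seifert vertex meet only in the fiber subgroup fails for the exceptional small pieces (the twisted $I$-bundle over the Klein bottle, where the boundary torus subgroup is normal of index two), so both arguments need the standard separate treatment of the Sol-type and other exceptional graph manifolds built from such pieces; you should at least flag this when you invoke (b) and (c).
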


If $N$ is  Seifert fibered, then the theorem holds trivially, and if $N$ is hyperbolic, then  it follows from well-known properties of hyperbolic 3-manifold groups  (we refer to Section \ref{section:hypmfds} for details).
If $N$ is neither Seifert fibered nor hyperbolic, then by the Geometrization Theorem $N$ has a non-trivial JSJ decomposition, in particular $N$ is Haken,
and in that case the theorem was  proved by Jaco and Shalen  \cite[Theorem~VI.1.6]{JS79}  and independently by Johannson \cite[Proposition~32.9]{Jo79}.
In this note we will give an alternative proof of  Theorem \ref{thm:centgen} for 3-manifolds with non-trivial JSJ decomposition using the  Geometrization Theorem proved by Perelman.
Our proof involves  basic facts about fundamental groups of Seifert fibered spaces and hyperbolic 3-manifolds and it consists of a careful study
of the fundamental group of the graph of groups corresponding to the JSJ decomposition.

In order to determine centralizers of 3-manifolds it thus suffices to understand centralizers of Seifert fibered spaces.
For the reader's convenience we recall the results of Jaco--Shalen and Johannson.
Let $N$ be a Seifert fibered 3-manifold with a given Seifert fiber structure. Then there exists a projection map $p\colon N\to B$ where $B$ is the base orbifold. We denote by $B'\to B$ the orientation cover, note that this is either the identity or a 2-fold cover. Following \cite{JS79} we refer to $p^{-1}_*(\pi_1(B'))$ as the \emph{canonical subgroup} of $\pi_1(N)$. If $f$ is a regular fiber of the Seifert fibration, then we refer to the subgroup of $\pi_1(N)$ generated by $f$ as the \emph{fiber subgroup}. Recall that if $N$ is non-spherical, then the fiber subgroup is infinite cyclic and normal. (Note that the fact that the fiber subgroup is normal implies in particular that it is well-defined, and not just up to conjugacy.)

\begin{remark}
Note that the definition of the canonical subgroup and of the fiber subgroup depend on the Seifert fiber structure.
By \cite[Theorem~3.8]{Sc83} (see also \cite{OVZ67} and \cite[II.4.11]{JS79}) a Seifert fibered 3-manifold $N$ admits a unique Seifert fiber structure unless $N$ is either covered by $S^3$, $S^2\times \R$, or the 3-torus,
or $N=S^1\times D^2$ or if $N$ is an $I$-bundle over the torus or the Klein bottle.
\end{remark}

The following theorem, together with Theorem \ref{thm:centgen}, now classifies centralizers of non-spherical 3-manifolds.

\begin{theorem}\label{thm:centsfs} Let  $N$ be  a  non-spherical Seifert fibered 3-manifold  with a given Seifert fiber structure.
Let $g\in \pi=\pi_1(N)$ be a non-trivial element.
Then the following hold:
\bn
\item if $g$ lies in the fiber group, then $C_{\pi}(g)$ equals the canonical subgroup,
\item if $g$ does not lie in the fiber group, then the intersection of $C_{\pi}(g)$ with the canonical subgroup is abelian, in particular $C_{\pi}(g)$ admits an abelian subgroup of index at most two,
\item if $g$ does not lie in the canonical subgroup, then $C_{\pi}(g)$ is infinite cyclic.
\en
\end{theorem}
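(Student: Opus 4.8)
The goal is to prove Theorem~\ref{thm:centsfs}, which describes centralizers in non-spherical Seifert fibered $3$--manifold groups. The plan is to work at the level of the fundamental group using the standard short exact sequence attached to the Seifert fibration. Write $F$ for the fiber subgroup; since $N$ is non-spherical, $F\cong\Z$ is infinite cyclic and normal in $\pi$, and $\pi/F\cong \pi_1^{orb}(B)$ is a $2$--orbifold group. Let $K$ denote the canonical subgroup, so that $K/F\cong\pi_1(B')$ is the fundamental group of the orientation cover of $B$; thus $K$ has index $1$ or $2$ in $\pi$, and $K$ is itself normal (it is the preimage of an index-$\le 2$ subgroup, namely the orientation subgroup, which is characteristic in $\pi/F$). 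The key structural facts I would invoke are: $F$ is central in $K$ (the monodromy of the fibration acts trivially on the fiber exactly over the orientation-preserving locus), and $\pi_1(B')$ is either trivial, infinite cyclic, a surface group, or contains a free subgroup of rank $\ge 2$ — in all cases it is torsion-free, and its own centralizers are well understood (a nontrivial element of a surface group or free group has infinite cyclic centralizer).

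For part (1), suppose $g\in F\setminus\{1\}$. Since $F$ is central in $K$ we immediately get $K\subseteq C_\pi(g)$. For the reverse inclusion, take $h\in C_\pi(g)$; then $hgh^{-1}=g$, but also $hgh^{-1}=g^{\pm 1}$ with sign determined by whether $h$ lies in $K$ (this is the defining property of the canonical subgroup: conjugation by $h$ either preserves or inverts the fiber according to orientation behaviour). Since $g$ has infinite order, $g=g^{-1}$ is impossible, so $h\in K$. Hence $C_\pi(g)=K$, the canonical subgroup.

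For part (3), suppose $g\notin K$. Then the image $\bar g$ of $g$ in $\pi/F\cong\pi_1^{orb}(B)$ does not lie in the orientation subgroup, so $\bar g$ is an orientation-reversing element of the $2$--orbifold group; in particular $\bar g$ has infinite order (it reverses orientation on $B'$, so it cannot be a torsion rotation) and generates a maximal cyclic — indeed a self-normalizing virtually cyclic — subgroup once one checks the relevant normal form. I would argue that $C_\pi(g)$ maps into $C_{\pi/F}(\bar g)$, that $C_{\pi/F}(\bar g)$ is infinite cyclic (centralizers of infinite-order elements in a $2$--orbifold group that are not "parallel to the fiber", i.e.\ that have nonabelian quotient behaviour, are virtually cyclic and in this orientation-reversing case exactly $\Z$), and then pull back: the kernel of $C_\pi(g)\to C_{\pi/F}(\bar g)$ is $C_\pi(g)\cap F$, which is all of $F$ since $F$ is not quite central (here I must be careful — $F$ need not be central in $\pi$, only in $K$). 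The cleanest route is instead: $C_\pi(g)\subseteq C_\pi(g^2)$ and $g^2\in K$, reducing part (3) to part (2) plus the observation that an abelian subgroup of $\pi$ containing an orientation-reversing element must be infinite cyclic. Concretely, $C_\pi(g)\cap K$ is abelian by part (2), has index $\le 2$ in $C_\pi(g)$; and an abelian group with a $\Z/2$ action (by conjugation by $g$) that inverts the fiber direction forces the whole thing to be cyclic — I would finish by a direct argument that $C_\pi(g)$, being virtually $\Z$ and torsion-free (no finite subgroups since $\pi$ is torsion-free for $N$ aspherical, and the spherical exceptions are excluded or handled separately), is $\Z$.

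For part (2), suppose $g\notin F$; set $A:=C_\pi(g)\cap K$. I would show $A$ is abelian by projecting to $K/F\cong\pi_1(B')$: the image $\bar A$ centralizes $\bar g\ne 1$, and in a surface or free group (or $\Z$ or trivial group) the centralizer of a nontrivial element is infinite cyclic, hence abelian; lifting, $A$ is an extension of an abelian group by the central $\Z=F$ with abelian quotient, and such a central extension $1\to\Z\to A\to\Z^{\le ?}\to 1$ — actually $\bar A\cong\Z$ so $A$ is a central extension of $\Z$ by $\Z$, which is necessarily abelian (it is $\Z^2$ or $\Z\times(\text{finite})$, but torsion-free so $\Z^2$, unless $\bar A$ is trivial in which case $A=F\cong\Z$). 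Therefore $A$ is abelian, and since $[C_\pi(g):A]\le[\pi:K]\le 2$, the group $C_\pi(g)$ has an abelian subgroup of index at most two. I expect the main obstacle to be the bookkeeping around the non-spherical exceptional cases where $\pi_1^{orb}(B)$ or $\pi_1(B')$ behaves degenerately — e.g.\ when $B$ is a sphere with few cone points, a disk, or an annulus, so that $\pi/F$ is finite or two-ended and "centralizer in a surface group is cyclic" must be replaced by a hands-on computation — and around correctly matching the orientation-reversing/Klein-bottle cases; a uniform statement requires either a short case analysis of the $2$--orbifold groups or an appeal to the algebraic fact that such groups are coherent with well-controlled centralizers. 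The non-spherical hypothesis is exactly what guarantees $F\cong\Z$ and lets this central-extension argument run.
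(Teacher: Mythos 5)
The paper does not actually prove Theorem \ref{thm:centsfs}: it attributes statement (1) to \cite[Proposition~II.4.5]{JS79} and statements (2) and (3) to \cite[Proposition~II.4.7]{JS79}. So your write-up is an independent proof rather than a variant of an argument in the text. Your skeleton --- the central extension $1\to F\to K\to \pi_1(B')\to 1$, the sign rule $hfh^{-1}=f^{\pm 1}$ detecting membership in the canonical subgroup $K$, and the reduction of (3) to (2) via $C_\pi(g)\cap F=\{1\}$ --- is the standard and correct approach, and your part (1) is complete.

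The genuine gap is in part (2), at the step ``the centralizer of a nontrivial element of $\pi_1(B')$ is infinite cyclic, hence $\bar A\cong\Z$ and $A$ is a central extension of $\Z$ by $\Z$.'' Two things go wrong. First, $\pi_1(B')$ is an orientable $2$--orbifold group, not a surface group; cone points give torsion (e.g.\ $B'=S^2(2,3,7)$), so it is not torsion-free as you assert. This is repairable: elliptic elements of Fuchsian groups have finite cyclic centralizers, and a central extension of a finite cyclic group by $\Z$ is still abelian. Second, and more seriously, when $B'$ is Euclidean ($T^2$, $S^2(2,2,2,2)$, $S^2(2,3,6)$, $S^2(2,4,4)$, $S^2(3,3,3)$) the centralizer in $\pi_1(B')$ of a nontrivial translation contains the whole translation lattice $\Z^2$, and a central extension of $\Z^2$ by $\Z$ need not be abelian: the preimage in $\pi$ of that lattice is the integer Heisenberg group whenever the Euler number is nonzero (Nil geometry). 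So ``$\bar A$ abelian plus $F$ central in $K$'' does not give $A$ abelian. What saves the statement is that $\bar A$ is the image of the actual centralizer, not all of $C_{\pi_1(B')}(\bar g)$: the commutator induces a bilinear alternating pairing $\Z^2\times\Z^2\to F$ (essentially multiplication by the Euler number $e$), and $a\in A$ forces $\bar a$ to pair trivially with $\bar g$; if $e\neq 0$ this cuts $\bar A$ down to a rank-one subgroup, and if $e=0$ the extension over $\Z^2$ is already abelian. You flag the ``degenerate'' bases as the main obstacle, but this Euclidean case is exactly where the argument as written breaks, and the Euler-number computation (or an equivalent case analysis) must be supplied. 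A smaller caveat applies at the end of (3): to get $C_\pi(g)\cong\Z$ the clean route is the one you mention only in passing --- $gfg^{-1}=f^{-1}$ for $f\in F$ forces $C_\pi(g)\cap F=\{1\}$, so $C_\pi(g)$ injects into the centralizer of the orientation-reversing element $\bar g$ in $\pi_1^{orb}(B)$, and one then needs to state and verify that such centralizers are infinite cyclic in each of the hyperbolic, Euclidean and elementary cases.
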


The first statement is \cite[Proposition~II.4.5]{JS79}. The second and the third statement follow from
\cite[Proposition~II.4.7]{JS79}.
 Using Theorems \ref{thm:centgen} and \ref{thm:centsfs}  one can now immediately obtain results on root structures and the divisibility of elements in 3-manifold groups.
We refer to \cite[Section~4]{AFW11} for details.

Note that given a group $\pi$ and an element $g\in \pi$ the set of conjugacy classes of $g$ is in a canonical bijection to the set $\pi/C_g(\pi)$. We thus obtain the following corollary to Theorem \ref{thm:centgen}.

\begin{theorem}
Let $N$ be a 3--manifold. If $N$ is not a Seifert fibered 3--manifold, then the number of conjugacy classes is infinite for any $g\in \pi_1(N)$.
\end{theorem}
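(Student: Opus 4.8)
The plan is to reformulate the statement in terms of the index of a centralizer. For $g\in\pi:=\pi_1(N)$, the set of conjugates of $g$ is in canonical bijection with the coset space $\pi/C_\pi(g)$, so---the assertion being vacuous for $g=1$, whose conjugacy class is a single point---it suffices to prove that $C_\pi(g)$ has infinite index in $\pi$ whenever $N$ is not Seifert fibered. I will argue by contradiction: assuming $[\pi:C_\pi(g)]<\infty$, I will deduce that $N$ is Seifert fibered. Two consequences of the Geometrization Theorem will be used repeatedly: since $N$ is not Seifert fibered it is in particular not a spherical space form, so $\pi$ is infinite; and since $N$ is irreducible with infinite fundamental group it is aspherical, hence $\pi$ is torsion-free and (for $g\neq1$) the element $g$ has infinite order.

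Write $C:=C_\pi(g)$. If $C$ is cyclic then, $g$ having infinite order, $C\cong\Z$. If $C$ is not cyclic, then by Theorem~\ref{thm:centgen} either $C=h\pi_1(T)h^{-1}$ for a JSJ torus or a boundary torus $T$, in which case $C\cong\Z^2$; or $C=hC_{\pi_1(M)}(h^{-1}gh)h^{-1}$ for a Seifert fibered JSJ component $M$, in which case $C$ is conjugate to a subgroup of $\pi_1(M)$ and hence $[\pi:\pi_1(M)]\le[\pi:C]<\infty$ as well. Thus in every case $\pi$ contains a finite-index subgroup---namely $C$, or $\pi_1(M)$---that is isomorphic to $\Z$, to $\Z^2$, or to $\pi_1(M)$. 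Note that when the third alternative occurs $N$ has a non-trivial JSJ decomposition (being neither Seifert fibered nor hyperbolic), so $M$ has non-empty toroidal boundary, is irreducible, and has infinite fundamental group, hence is itself aspherical and Seifert fibered.

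To conclude I pass to the finite cover $\widehat N\to N$ corresponding to this finite-index subgroup. Then $\widehat N$ is a compact, orientable, irreducible $3$-manifold, and it is aspherical because it covers the aspherical manifold $N$. By the previous paragraph, $\pi_1(\widehat N)$ is isomorphic to the fundamental group of a Seifert fibered space ($S^1\times D^2$, $T^2\times I$, or $M$). I now invoke two standard consequences of the Geometrization Theorem, both recorded in the survey~\cite{AFW11}: a compact, orientable, aspherical $3$-manifold whose fundamental group is isomorphic to that of a Seifert fibered space is itself Seifert fibered; and an irreducible $3$-manifold with infinite fundamental group that is finitely covered by a Seifert fibered space is Seifert fibered. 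The first gives that $\widehat N$ is Seifert fibered, and then the second gives that $N$ is Seifert fibered, contradicting our hypothesis. Hence $[\pi:C_\pi(g)]=\infty$, which is what we wanted.

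The main obstacle is this last step. Recognizing ``Seifert fibered'' from homotopy-theoretic and covering-space data bundles together several deep results---the Seifert fiber space conjecture (Gabai, Casson--Jungreis), Scott's theorem that there are no fake Seifert fibered spaces with infinite fundamental group, and the stability of Seifert fiberings under finite covers---so the honest thing is to cite~\cite{AFW11} for them rather than reprove them. One should also keep an eye on the small exceptional pieces: a solid torus, a copy of $T^2\times I$, or the orientable twisted $I$-bundle over the Klein bottle can genuinely arise as $\widehat N$, but each of these is Seifert fibered, so they cause no real difficulty once one is careful to feed the correct model manifold into the recognition statement.
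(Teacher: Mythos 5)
Your proof is correct and follows the same route the paper intends: the paper derives this statement as an immediate corollary of Theorem \ref{thm:centgen} via the bijection between the conjugacy class of $g$ and $\pi/C_{\pi}(g)$, and you have simply filled in the verification that this centralizer always has infinite index when $N$ is not Seifert fibered. The only real difference is that your closing step invokes heavy recognition theorems (Scott's rigidity for Seifert fibered spaces, the Seifert fiber space theorem of Casson--Jungreis and Gabai) where one could finish more cheaply --- a proper vertex group of a non-trivial graph of groups has infinite index by Bass--Serre theory, and a hyperbolic $3$-manifold group is not virtually abelian --- but that is a matter of economy, not a gap.
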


This result was first obtained by de la Harpe and Pr\'eaux \cite{HP07} using different methods. They consider a slightly larger class of 3--manifolds, but extending our approach to the class of 3--manifolds considered in \cite{HP07} poses no problems.
We also refer to \cite{HP07} for an application of this result to the von Neumann algebra $W_\l^*(\pi_1(N))$.

\subsection*{Acknowledgment.} We would like to thank Matthias Aschenbrenner, Pierre de la Harpe, Saul Schleimer, Stephan Tillmann and Henry Wilton for helpful conversations.

\section{Graphs of groups}\label{section:groupgraph}

\noindent
In this section we summarize some basic definitions and facts concerning graphs of groups and their fundamental groups. We refer to \cite{Ba93, Co89, Se80} for missing details.

\subsection{Graphs}
A {\it graph}\/ $\scry$ consists of a set $V=V(\scry)$ of {\it vertices}\/ and a set $E=E(\scry)$ of {\it edges}, and two maps $E\to V\times V\colon e\mapsto (o(e),t(e))$ and $E\to E\colon e\mapsto\ol{e}$, subject to the following condition: for each $e\in E$ we have $\ol{\ol{e}}=e$, $\ol{e}\neq e$, and $o(e)=t(\ol{e})$.
We sometimes also denote $\ol{e}$ by $e^{-1}$.
 Throughout this paper, all graphs are understood to be connected and finite \textup{(}i.e., their vertex sets
 and edge sets are finite\textup{)}.

%

\subsection{The fundamental group of a graph of groups}\label{sec:pi1}
Let $\scry$ be a graph. A \emph{graph $\mathcal G$ of groups based on $\scry$} consists
of families $\{G_v\}_{v\in V(\scry)}$ and $\{G_e\}_{e\in E(\scry)}$ of groups satisfying $G_e=G_{\ol{e}}$ for every $e\in E(\scry)$, together with a family $\{\varphi_e\}_{e\in E(\scry)}$ of monomorphisms $\varphi_e\colon G_e\to G_{t(e)}$ ($e\in E(\scry)$). We will refer to  $\scry$ as the \emph{underlying graph} of $\scrg$.

\medskip

Let $\scrg$ be a graph of groups based on a graph $\scry$. We recall the construction of the fundamental group $G=\pi_1(\scrg)$ of $\scrg$ from \cite[I.5.1]{Se80}. First, consider the \emph{path group $\pi(\scrg)$}
 which is  generated by the groups $G_v$ ($v\in V(\scry)$) and the elements $e\in E(\scry)$ subject to the relations
$$e \varphi_e(g) \ol{e} = \varphi_{\ol{e}}(g) \qquad (e\in E(\scry), g\in G_e).$$
By a \emph{path in $\scry$ } from a vertex $v$ to a vertex $w $ we mean a sequence
$(e_1, e_2, \dots, e_n)$ where $o(e_1)=v, t(e_i)=o(e_{i+1}), i=1,\dots,n-1$ and $t(e_n)=w$.

By a \emph{path  in  $\scrg$} from a vertex $v$ to a vertex $w$ we mean a sequence
$$(g_0, e_1, g_1, e_2, \dots, e_n, g_n),$$ of elements in $E$
where $(e_1,\dots,e_n)$ is a path of length $n$ in $\scry$ from $v$ to $w$ and
where $g_0\in G_v$ and where $g_i\in G_{t(e_i)}$ for $i=1,\dots,n$.
We write $l(\gamma)=n$ and call it the length of $\gamma$.
We say that the path $\gamma$ {\it represents}\/ the element
$$g=g_0 e_1 g_1 e_2 \cdots e_n g_n$$
of $\pi(\scrg)$.

%

\medskip

Let now $w$ be a fixed vertex of $\scry$.
We will refer to a path from $w$ to $w$ as a \emph{loop based at $w$}. The  fundamental group $\pi_1(\scrg,w)$ of $\scrg$ (with base point $w$) is defined to be the subgroup of $\pi(\scrg)$ consisting of elements represented by loops based at $w$.
If $w'\in V(\scry)$ is another base point, and $g$ is an element of $\pi(\scrg)$ represented by a path from $w'$ to $w$, then $\pi_1(\scrg,w')\to\pi_1(\scrg,w)\colon t\mapsto g^{-1}tg$ is an isomorphism. By abuse of notation we write $\pi_1(\scrg)$ to denote $\pi_1(\scrg,w)$ if the particular choice of base point is irrelevant.

Now let $v\in V$. Pick a path $g$ from $v$ to $w$. Then the map $G_v\to \pi_1(\scrg,w)$ given by $t\mapsto g^{-1}tg$ defines a group morphism which is injective (see again  \cite[I.5.2, Corollary~1]{Se80}).
In particular the vertex groups define subgroups of $\pi_1(\scrg,w)$ which are well-defined up to conjugation.
Given a  graph of groups $\GG$ and a base vertex $w$ it is always understood that for each vertex $v$ we picked once and for all a path
from $v$ to $w$.

We will later on make use of the following operations on paths.
Given a path $p$ in $\scrg$ from $v_1$ to $v_2$ we write $o(p)=v_1$ and $t(p)=v_2$.
Given two paths
\[ \ba{rcl} p&:=&(g_0, e_1, g_1, e_2, \dots, e_n, g_n), \mbox{ and }\\
q&:=&(h_0, f_1, h_1, f_2, \dots, f_m, h_m),\ea \]
with $t(p)=o(q)$
we define
\[ p* q:=(g_0, e_1, g_1, e_2, \dots, e_n, g_n\cdot h_0, f_1, h_1, f_2, \dots, f_m, h_m)\]
which is a path from $o(p)$ to $t(q)$.
Furthermore, given a path
\[ p:=(g_0, e_1, g_1, e_2, \dots, e_n, g_n)\]  we define the inverse path to be
\[ p^{-1}:=(g_n^{-1}, \ol{e_n}, \dots,g_1^{-1},\ol{e_1},g_0^{-1}).\]
  Note that $p^{-1}$ is a path from $t(p)$ to $o(p)$.


\subsection{Reduced paths}

A path $(g_0, e_1, g_1, e_2, \dots, e_n, g_n)$ in $\scrg$ is {\it reduced}\/ if it satisfies one of the following conditions:
\begin{enumerate}
\item $n=0$, or
\item  $n>0$ and $g_i\notin \varphi_{e_i}(G_{e_i})$ for each index $i$ such that $e_{i+1}=\overline{e_i}$.
\end{enumerate}
Given $g\in \pi(\GG)$ we define its length $l(g)$ to be the length of a reduced path representing it. Note that this is well-defined (see \cite[p.~4]{Se80}), i.e. any $g$ is represented by a reduced path and the definition is independent of the choice of the reduced path. Also note that
\[ l(g)=\min\{ l(p) \, |\, \mbox{$p$ a path which represents $g$}\}.\]
Note that $l(g)=0$ if and only if $g$ lies in $G_{v}$ for some $v\in V$.


We say that  $s=(g_0, e_1, g_1, e_2, \dots, e_n, g_n)$ is \emph{cyclically reduced} if
$s$ is reduced and if one of the following holds:
\bn
\item $n=0$, or
\item $e_1\ne \ol{e_n}$, or
\item $e_1=\ol{e_n}$ but $g_ng_0$ is not conjugate to an element in $\Im(\varphi_{e_n})$.
\en
Note that  a reduced loop $s=(g_0, e_1, g_1, e_2, \dots, e_n, g_n)$ is cyclically reduced if and only if   the element it represents  has minimal length in its conjugacy class
in the path group $\pi(\scrg)$.

 We say that $g\in \pi_1(\GG,w)$ is \emph{cyclically reduced} if there exists a cyclically reduced loop which represents it.
 It is straightforward to see that  $g$ is cyclically reduced if and only if any reduced loop representing it is cyclically reduced.
 Also note that if $g$ is cyclically reduced, then $l(g^n)=n\cdot l(g)$.

Any element $g$ of the path groups $\pi(\scrg)$ is conjugate in $\pi(\scrg)$ to a cyclically reduced element $s$, we can thus define $cl(g)=l(s)$.
Note that this is independent of the choice of $s$.
Note that if $g$ is cyclically reduced, then a straightforward argument shows that $l(g^n)=n\cdot l(g)$. In particular
given any $g$ we have $cl(g^n)=n\cdot cl(g)$.
\section{Fundamental groups of 3-manifolds}

In the next two sections we cover properties of fundamental groups of hyperbolic 3-manifold groups and of Seifert fibered spaces,
before we return to the study of 3-manifold groups in general.

\subsection{Fundamental groups of hyperbolic 3-manifolds}\label{section:hypmfds}

Let $N$ be a 3-manifold. We say that $N$ is hyperbolic if the interior admits a complete metric of finite volume and constant sectional curvature equal to $-1$.

Throughout this section we write
\[ U:=\left\{ \bp \eps & a \\ 0&\eps \ep \mbox{ with } \eps\in \{-1,1\} \mbox{ and }a\in \C\right\}\subset \sl(2,\C).\]
Note that $U$ is an abelian subgroup of $\sl(2,\C)$. Recall that $A\in \sl(2,\C)$ is called \emph{parabolic} if it is conjugate to an element in $U$.
We say that $A$ is \emph{loxodromic} if $A$ is diagonalizable with eigenvalues $\l,\l^{-1}$ such that $|\l|>1$.
We  recall the following well known proposition.

\begin{proposition}\label{prop:hypbasics}
Let $N$ be a hyperbolic 3-manifold. Then the following hold:
\bn
\item There exists a faithful discrete representation  $\rho\colon \pi_1(N)\to \sl(2,\C)$.
\item Let $g\in \pi_1(N)$, then $\rho(g)$ is either parabolic or loxodromic.
\item An element $g\in \pi_1(N)$ is conjugate to an element in a boundary component if and only if $\rho(g)$ is parabolic.
\item Let $T$ be a boundary torus, then  there exists a matrix $P\in \sl(2,\C)$ such that
$P \rho(\pi_1(T)) P^{-1}\subset U$.
\item Let $g\in \pi_1(N)$. Then $C_g(\pi_1(N))$ is either infinite cyclic or a free abelian group of rank two.
The latter case occurs precisely when $g$ is conjugate to an element in a boundary component $T$ and in that case $C_g(\pi_1(N))$ is a conjugate of $\pi_1(T)$.
\en
\end{proposition}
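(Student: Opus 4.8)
Parts (1)--(4) are standard properties of the holonomy of a complete finite--volume hyperbolic structure, and I would simply quote them from the literature on Kleinian groups; writing $\Gamma=\rho(\pi_1(N))$, the facts I would extract for part (5) are that $\Gamma$ is a discrete subgroup of $\sl(2,\C)$ and that, since $\mathbb{H}^3/\Gamma=\operatorname{int}(N)$ is a manifold, $\Gamma$ acts freely on $\mathbb{H}^3$ and is therefore torsion free, so in particular $-I\notin\Gamma$. The plan for part (5) is to reduce it to a computation of centralizers in $\sl(2,\C)$ together with one genuinely three--dimensional input about cusps.

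Since $\rho$ is injective it restricts to an isomorphism from $C_{\pi_1(N)}(g)$ onto $C_\Gamma(\rho(g))=\Gamma\cap C_{\sl(2,\C)}(\rho(g))$, so it suffices to analyze the latter; we may assume $g\neq 1$, so that $\rho(g)\neq\pm I$ and, by part (2), $\rho(g)$ is either loxodromic or parabolic. \emph{If $\rho(g)$ is loxodromic}, then after conjugating in $\sl(2,\C)$ I would put it in diagonal form, so that $C_{\sl(2,\C)}(\rho(g))$ is the diagonal torus, which is isomorphic to $\C^{*}\cong\R\times S^1$. Hence $C_\Gamma(\rho(g))$ is a discrete torsion--free subgroup of $\C^{*}$; its intersection with $S^1$ is a finite, hence trivial, subgroup, so it embeds in $\R$ as a discrete subgroup and, being nontrivial, is infinite cyclic. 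By part (3), $g$ is not conjugate into a boundary component in this case.

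\emph{If $\rho(g)$ is parabolic}, then by part (3) there is a boundary torus $T$ with a conjugate of $g$ lying in $\pi_1(T)$; replacing $g$ by that conjugate --- which replaces $C_{\pi_1(N)}(g)$ by a conjugate and leaves the assertion unchanged --- I would assume $g\in\pi_1(T)$. Since boundary tori of a hyperbolic $3$--manifold are incompressible, $\pi_1(T)\cong\Z^2$ injects into $\pi_1(N)$, and by part (4) the group $\rho(\pi_1(T))$ is conjugate into the abelian group $U$; thus $\pi_1(T)$ is abelian and $\pi_1(T)\subseteq C_{\pi_1(N)}(g)$. For the opposite inclusion I would use that the nontrivial parabolic $\rho(g)$ has a single fixed point $\xi$ on $\partial\mathbb{H}^3=\C\cup\{\infty\}$, that every $\delta\in C_\Gamma(\rho(g))$ fixes $\xi$ (because $\delta$ carries $\operatorname{Fix}(\rho(g))=\{\xi\}$ to itself), and that every element of $\rho(\pi_1(T))$ fixes $\xi$ as well (by part (4) these elements share a common fixed point, which must be $\xi$ since $\rho(g)\in\rho(\pi_1(T))$). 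At this point I would invoke the one step that is not formal, and where finite volume is essential: in a finite--volume hyperbolic $3$--manifold the peripheral subgroup $\rho(\pi_1(T))$ is a maximal parabolic subgroup, equivalently it equals the full stabilizer $\operatorname{Stab}_\Gamma(\xi)$, a standard consequence of the Margulis lemma and the thick--thin decomposition. Then $\rho(\pi_1(T))\subseteq C_\Gamma(\rho(g))\subseteq\operatorname{Stab}_\Gamma(\xi)=\rho(\pi_1(T))$, so $C_{\pi_1(N)}(g)=\pi_1(T)$, which is free abelian of rank two.

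Finally I would assemble the cases: for $g\neq 1$ they are exhaustive and mutually exclusive; in the loxodromic case $C_{\pi_1(N)}(g)$ is infinite cyclic and $g$ is not conjugate into a boundary component, and in the parabolic case $C_{\pi_1(N)}(g)$ is free abelian of rank two and, undoing the conjugation, is a conjugate of $\pi_1(T)$ for a boundary torus $T$; by part (3) this second case occurs precisely when $g$ is conjugate into a boundary component. The only step I expect to be a genuine obstacle is the maximality of the peripheral subgroups used in the parabolic case; everything else is linear algebra in $\sl(2,\C)$ together with the elementary classification of discrete subgroups of $\C^{*}$ and of $\R^2$.
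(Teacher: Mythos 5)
Your proposal is correct and follows essentially the same route as the paper: both reduce part (5) to linear algebra for centralizers in $\sl(2,\C)$ combined with discreteness and torsion-freeness of the image, plus one genuinely geometric input about cusps. The paper cites Thurston's Proposition 5.4.4 (rank-two abelian subgroups are conjugates of peripheral subgroups) where you cite maximality of cusp stabilizers via the Margulis lemma; these are the same fact in different clothing, and your explicit case split by parabolic versus loxodromic type is just a more self-contained organization of the same argument.
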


We include the proof of the proposition for completeness' sake.

\begin{proof}
\bn
\item
A hyperbolic 3-manifold $N$ admits a faithful discrete representation $\pi_1(N)\to \operatorname{Isom}(\Bbb{H}^3)=\psl(2,\C)$.
  Thurston (see \cite[Section~1.6]{Sh02}) showed that this representation lifts to a faithful discrete representation $\pi_1(N)\to \sl(2,\C)$.
\item This follows immediately from considering the Jordan transform of $\rho(g)$ and from the fact that the infinite cyclic group generated by $\rho(g)$ is discrete in $\sl(2,\C)$.
\item This is well-known, see e.g. \cite[p.~115]{Ma07}.
\item This statement follows easily from the fact that $\pi_1(T)\subset \sl(2,\C)$ is a discrete subgroup isomorphic to $\Z^2$.
\item By (1) we can view  $\pi=\pi_1(N)$ as a discrete, torsion-free subgroup of $\sl(2,\C)$.
Note that the centralizer of any non-trivial matrix in $\sl(2,\C)$ is abelian (this can be seen easily using the Jordan normal form of such a matrix).
Now let $g\in \pi\subset \sl(2,\C)$ be non-trivial. Since $\pi$ is  torsion-free and discrete in $\sl(2,\C)$ it follows easily  that
$C_{\pi}(g)$ is in fact either infinite cyclic
or a free abelian group of rank two.
It now follows from \cite[Proposition~5.4.4]{Th79} (see also \cite[Corollary~4.6]{Sc83} for the closed case) that there exists a boundary component $S$ and $h\in \pi_1(N)$ such that
\[ C_{\pi}(g)=h\pi_1(S)h^{-1}.\]
\en
\end{proof}

Given a group $\pi$ we say that an element $g$ is \emph{divisible by an integer $n$} if
there exists an $h\in \pi$ with $g=h^n$. We say $g$ is \emph{infinitely divisible} if $g$ is divisible by infinitely many integers.
The following lemma is an immediate consequences of Proposition \ref{prop:hypbasics} (5).

\begin{lemma}\label{lem:hypdivisible}
Let $\pi\subset \sl(2,\C)$ be a discrete torsion-free group. Then $\pi$ does not contain any non-trivial  elements which are infinitely divisible.
\end{lemma}

Let $\pi$ be a group. We say that a subgroup $H\subset \pi$ is \emph{division closed} if for any $g\in \pi$ and $n>0$ with $g^n\in H$ the element   $g$ already lies in $H$.
The following lemma is an immediate consequence of Proposition \ref{prop:hypbasics} (2) and (5) and from the observation that $A\subset \sl(2,\C)$ is parabolic (respectively loxodromic)
if and only if a non-trivial power of $A$ is parabolic (respectively loxodromic).

\begin{lemma}\label{lem:divisionclosed}
Let $N$ be a 3-manifold  such that the interior of $N$ is a hyperbolic 3-manifold of finite volume.
Let $T$ be a boundary component of $N$. Then $\pi_1(T)\subset \pi_1(N)$ is division closed.
\end{lemma}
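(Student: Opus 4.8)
The plan is to deduce the statement from Proposition~\ref{prop:hypbasics}, reducing it to the computation of the centralizer of $g$. Let $g\in\pi_1(N)$ and $n\geq 1$ with $g^n\in\pi_1(T)$; we must show $g\in\pi_1(T)$. By Proposition~\ref{prop:hypbasics}~(1) we may regard $\pi_1(N)$ as a discrete torsion-free subgroup of $\sl(2,\C)$, so if $g^n=1$ then $g=1\in\pi_1(T)$ and we are done; assume from now on that $g^n\neq 1$. Since $g^n$ commutes with $g$ we have $g^n\in C_{\pi_1(N)}(g)$, so it suffices to prove that $C_{\pi_1(N)}(g)=\pi_1(T)$: this equality immediately yields $g\in C_{\pi_1(N)}(g)=\pi_1(T)$.

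To get at $C_{\pi_1(N)}(g)$ I would first determine the type of $g$. By Proposition~\ref{prop:hypbasics}~(4) the subgroup $\pi_1(T)$ is conjugate into $U$, and every non-identity element of $U$ is either $-I$ or parabolic; since $-I$ cannot lie in the torsion-free group $\pi_1(N)$, the non-trivial element $g^n$ is parabolic. Now Proposition~\ref{prop:hypbasics}~(2) says that $g$ is parabolic or loxodromic, and if $g$ were loxodromic then, by the observation that a non-trivial power of a loxodromic element is again loxodromic, $g^n$ would be loxodromic, contradicting the previous sentence. Hence $g$ is parabolic, so Proposition~\ref{prop:hypbasics}~(3) shows that a conjugate of $g$ lies in some boundary component $S$, and then Proposition~\ref{prop:hypbasics}~(5) shows that $C_{\pi_1(N)}(g)$ is free abelian of rank two and equal to a conjugate $h\pi_1(S)h^{-1}$ of a peripheral subgroup.

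The final step — identifying $h\pi_1(S)h^{-1}$ with $\pi_1(T)$ — is the one I expect to be the main obstacle. The subgroups $\pi_1(T)$ and $C_{\pi_1(N)}(g)=h\pi_1(S)h^{-1}$ are both conjugates of peripheral subgroups, and they intersect non-trivially, since $g^n$ lies in both. In a finite-volume hyperbolic $3$-manifold, however, two conjugates of peripheral subgroups that meet non-trivially must coincide: a non-trivial parabolic element of $\sl(2,\C)$ fixes a unique point of $\partial\mathbb{H}^3$, the peripheral subgroups are exactly the full stabilizers of such parabolic fixed points, and so a shared non-trivial (hence parabolic) element forces the two subgroups to fix the same point and therefore to be equal. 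This maximality of cusp subgroups is essentially what is packaged into the final sentence of Proposition~\ref{prop:hypbasics}~(5) and its proof via \cite[Proposition~5.4.4]{Th79}. It follows that $C_{\pi_1(N)}(g)=\pi_1(T)$, and hence $g\in\pi_1(T)$, as required.
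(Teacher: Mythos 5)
Your proof is correct and follows essentially the route the paper intends: the paper gives no written proof, merely asserting that the lemma follows from Proposition \ref{prop:hypbasics} (2) and (5) together with the observation that powers preserve the parabolic/loxodromic dichotomy, and your write-up is precisely that deduction, with the final identification of the two peripheral subgroups supplied by the standard malnormality/unique-parabolic-fixed-point fact (which the paper records separately as Lemma \ref{lem:malnormal}).
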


Let $\pi$ be a group. We say that a subgroup  $H$ is \emph{malnormal} if $gHg^{-1}\cap H$ is trivial for any $g\not\in H$.
The following lemma is well-known.

\begin{lemma}\label{lem:malnormal} \label{lem:malnormaltwo}
Let $N$ be a hyperbolic 3-manifold.
\bn
\item Let $T$ be a boundary torus. Then $\pi_1(T)\subset \pi_1(N)$ is malnormal.
\item  Let $T_1$ and $T_2$ be distinct boundary tori. Then  for any $g\in \pi_1(N)
$ we have
$\pi_1(T_1)\cap g\pi_1(T_2)g^{-1}=\{e\}$.
\en
\end{lemma}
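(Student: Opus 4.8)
The plan is to work inside $\sl(2,\C)$ via the faithful discrete representation $\rho$ of Proposition~\ref{prop:hypbasics} and to exploit its action on the sphere at infinity $\partial\mathbb{H}^3=\hat{\C}$. Recall from Proposition~\ref{prop:hypbasics}~(4) that for each boundary torus $T$ the group $\rho(\pi_1(T))$ is conjugate into $U$; every element of a conjugate of $U$ other than $\pm I$ acts on $\hat{\C}$ as a nontrivial translation in suitable coordinates, hence is parabolic with a single fixed point, and all elements of $\rho(\pi_1(T))$ share one common fixed point $q_T\in\partial\mathbb{H}^3$. (For $e\neq x\in\pi_1(T)$ we have $\rho(x)\neq\pm I$, since $\rho$ is faithful and $\pi_1(N)$ is torsion free, so $\rho(x)$ is genuinely parabolic.) The two facts I will take as known from the geometry of finite-volume hyperbolic $3$-manifolds, via the thick--thin decomposition and the structure of cusp cross-sections (see \cite{Th79}), are: (i) $\pi_1(T)=\mathrm{Stab}_{\pi_1(N)}(q_T):=\{h\in\pi_1(N)\mid\rho(h)q_T=q_T\}$, i.e.\ peripheral subgroups are \emph{maximal parabolic}; and (ii) if $T_1\neq T_2$ are distinct boundary tori, then $q_{T_1}$ and $q_{T_2}$ lie in distinct $\pi_1(N)$-orbits on $\partial\mathbb{H}^3$.

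For (1): suppose $g\in\pi_1(N)$ and $e\neq x\in\pi_1(T)\cap g\pi_1(T)g^{-1}$. From $x\in\pi_1(T)$ the element $\rho(x)$ is parabolic with unique fixed point $q_T$. Writing $x=gyg^{-1}$ with $e\neq y\in\pi_1(T)$, the element $\rho(x)=\rho(g)\rho(y)\rho(g)^{-1}$ also fixes $\rho(g)(q_T)$; by uniqueness of the parabolic fixed point, $\rho(g)(q_T)=q_T$. Hence $g\in\mathrm{Stab}_{\pi_1(N)}(q_T)=\pi_1(T)$ by (i) and faithfulness of $\rho$, which is malnormality.

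For (2): suppose $g\in\pi_1(N)$ and $e\neq x\in\pi_1(T_1)\cap g\pi_1(T_2)g^{-1}$. As above, $\rho(x)$ is parabolic with unique fixed point equal both to $q_{T_1}$ (since $x\in\pi_1(T_1)$) and to $\rho(g)(q_{T_2})$ (since $x$ is conjugate by $g$ into $\pi_1(T_2)$). Therefore $q_{T_1}=\rho(g)(q_{T_2})$, so $q_{T_1}$ and $q_{T_2}$ lie in the same $\pi_1(N)$-orbit, contradicting (ii); hence the intersection is trivial.

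I expect the only genuine content to be facts (i) and (ii); everything after that is formal bookkeeping with parabolic fixed points and uses only the finiteness of the parabolic fixed-point set of a parabolic. Concretely, (i) expresses that a cusp cross-section is exactly a torus (not a proper quotient), and (ii) that distinct ends of $N$ give distinct cusps; both genuinely require the finite-volume hypothesis. An alternative staying closer to the excerpt is to first establish malnormality of the \emph{abstract} maximal parabolic subgroup $P:=\mathrm{Stab}_{\pi_1(N)}(q)$: a short discreteness argument (if $P$ contained a loxodromic fixing $q$ alongside a parabolic fixing $q$, then conjugating the parabolic by powers of the loxodromic produces nontrivial elements arbitrarily close to the identity, contradicting discreteness) shows $P$ is purely parabolic, whence $P\cap hPh^{-1}=\mathrm{Stab}(q)\cap\mathrm{Stab}(hq)=\{e\}$ whenever $hq\neq q$; combining this with (i) gives (1) and with (i)+(ii) gives (2). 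In either presentation, isolating and citing (i)--(ii) correctly is the crux; the rest is routine.
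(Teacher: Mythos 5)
Your argument is correct, and in fact the paper offers no proof of this lemma at all: it is stated with the remark that it is ``well-known,'' so there is nothing in the text to compare against. Your proof is the standard one that this remark implicitly points to. You correctly isolate the two genuinely geometric inputs --- (i) that a peripheral subgroup $\pi_1(T)$ is the \emph{full} stabilizer in $\pi_1(N)$ of its parabolic fixed point $q_T$, and (ii) that distinct boundary tori give parabolic fixed points in distinct $\pi_1(N)$-orbits --- and everything else is, as you say, bookkeeping with the uniqueness of the fixed point of a nontrivial parabolic. Two small points worth keeping explicit in a final write-up: first, the observation that $\rho(x)\neq\pm I$ for $x\neq e$ (faithfulness plus torsion-freeness rules out $-I$, which has order two) is needed so that ``parabolic'' really means ``unique fixed point''; second, fact (i) as you state it silently includes the claim that $\mathrm{Stab}_{\pi_1(N)}(q_T)$ contains no loxodromics, which is exactly the discreteness argument you sketch in your alternative presentation --- it is better folded into the main line of the proof than left as an aside. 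With those points made explicit, the proof is complete and correctly attributes the finite-volume hypothesis to where it is actually used.
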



%
%
%

\subsection{Fundamental groups of Seifert fibered manifolds}


Let $N$ be a Seifert fibered space with regular fiber $c$.
First note that if  $T$ is a boundary torus, then  the Seifert fibration restricted to $T$ induces a product structure.
It follows that   $c\in \pi_1(T)$ and that $c$ is indivisible in $\pi_1(T)\cong \Z^2$.

The following results  summarize the key properties of fundamental groups of Seifert fibered spaces which are relevant to our discussion.

\begin{theorem}\label{thm:sfs}
Let $N$ be a Seifert fibered 3-manifold with regular fiber $c$. Then there exists an $s\in \N$ with the following property:
If $T$ is a boundary component, and if  $g\not\in \pi_1(T)$ but some power of $g$ lies in $\pi_1(T)$, then there exists $d\leq s$ such that $g^d=c$ or $g^d=c^{-1}$.
\end{theorem}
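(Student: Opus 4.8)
Here is the plan. The strategy is to push the hypothesis down to the base orbifold via the Seifert exact sequence, conclude that the image of $g$ is a torsion element of bounded order, and then read off the conclusion from a fibred solid‑torus neighbourhood of an exceptional fibre.

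\textbf{Setup.} Since $T$ is a boundary component, $\partial N\neq\emptyset$, and we may assume $N$ is not a solid torus (otherwise $\pi_1(T)\to\pi_1(N)$ is onto and the hypothesis ``$g\notin\pi_1(T)$'' is vacuous). Then $N$ is aspherical, so $\pi:=\pi_1(N)$ is torsion free and $\pi_1(T)\hookrightarrow\pi$. Let $p\colon N\to B$ be the projection to the base $2$‑orbifold, with exceptional fibres of multiplicities $\alpha_1,\dots,\alpha_k$, and put $s:=\operatorname{lcm}(2,\alpha_1,\dots,\alpha_k)$. As $N$ is non‑spherical the fibre subgroup $\langle c\rangle$ is infinite cyclic and normal, giving a short exact sequence $1\to\langle c\rangle\to\pi\xrightarrow{p_*}Q\to 1$ with $Q=\pi_1^{orb}(B)$. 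Over $T$ the fibration is a product, so $\pi_1(T)=\langle c,h\rangle\cong\Z^2$ with $\bar h:=p_*(h)$ an indivisible boundary curve of $B$, and $p_*(\pi_1(T))=\langle\bar h\rangle$.

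\textbf{Down to the orbifold group.} Write $\bar g:=p_*(g)$. If $\bar g\in\langle\bar h\rangle$, then $g\bar h^{-m}$ lies in the kernel $\langle c\rangle$ for some $m$, so $g\in\langle c,h\rangle=\pi_1(T)$, contradicting the hypothesis; thus $\bar g\notin\langle\bar h\rangle$. From $g^n\in\pi_1(T)$ we get $\bar g^{\,n}\in\langle\bar h\rangle$. Because $B$ has non‑empty boundary, $Q$ is a free product $\Z_{\alpha_1}*\cdots*\Z_{\alpha_k}*F$ of the cone‑point groups with a free group, in which $\bar h$ is conjugate to a cyclically reduced word of syllable length at least one which is not a proper power. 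A normal‑form argument (equivalently, an argument with the action on the Bass--Serre tree) then shows that if $\bar g\notin\langle\bar h\rangle$ but some power of $\bar g$ lies in $\langle\bar h\rangle$, then $\bar g$ is torsion; this is exactly where one uses that $B$ has boundary, so that $\bar h$ is not entangled in a relation with the rest of $Q$. Hence $\bar g$ has finite order $d_0$, and by Kurosh $d_0$ divides some $\alpha_i$, so $d_0\le s$.

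\textbf{Back upstairs.} Note $\bar g\neq 1$ (else $g\in\langle c\rangle\subseteq\pi_1(T)$), so $d_0\ge 2$ and $g^{d_0}\in\langle c\rangle$; as $\pi$ is torsion free, $g^{d_0}=c^{a}$ with $a\neq 0$. Also $g$ commutes with $c$: if $gcg^{-1}=c^{-1}$ then $c^{a}=gc^{a}g^{-1}=c^{-a}$, so $c^{2a}=1$, absurd. Being torsion, $\bar g$ is conjugate in $Q$ to a power $\bar x_i^{\,e}$ of a cone‑point generator, the image of the core $x_i$ of the $i$‑th exceptional fibre; recall that a fibred solid‑torus neighbourhood $V_i$ of that fibre has $\pi_1(V_i)=\langle x_i\rangle\cong\Z$ and $x_i^{\alpha_i}=c^{\pm 1}$. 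Lifting the conjugacy, $g$ is conjugate to an element of $x_i^{\,e}\langle c\rangle\subseteq\langle x_i\rangle$, i.e.\ up to conjugacy $g=x_i^{\,\ell}$ for some $\ell$. If $\ell=\pm 1$ then $g^{\alpha_i}=c^{\pm 1}$, and conjugating back does not change this since $\langle c\rangle$ is normal, so $d:=\alpha_i\le s$ works.

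\textbf{The main obstacle.} The crux is controlling the exponent $\ell$ (equivalently $a$): one must rule out -- or, failing that, absorb into the constant $s$ -- the possibility that $g$ is conjugate to a proper power $x_i^{\,\ell}$ with $|\ell|\ge 2$ of an exceptional‑fibre core, since then $g^{d}=x_i^{\,\ell d}$ equals $c^{\pm 1}=x_i^{\,\pm\alpha_i}$ only when $\ell\mid\alpha_i$. This is not a statement about $Q$ alone but about the position of $g$ relative to the fibre $c$, and it is where the precise identity $x_i^{\alpha_i}=c^{\pm 1}$, the torsion bound $d_0\le s$, and possibly an enlargement of $s$ must be combined. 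Everything else -- the exact sequence, the torsion bound, the passage through $V_i$ -- is routine given the standard facts about Seifert fibered spaces and two‑orbifold groups recalled above.
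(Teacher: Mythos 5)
Your strategy is the same as the paper's: project to the base orbifold via the Seifert exact sequence $1\to\langle c\rangle\to\pi\to\pi_1^{orb}(B)\to 1$, deduce that the image of $g$ is a torsion element (the paper just cites \cite[Remark~II.3.1]{JS79} for this; your free-product/normal-form argument is a reasonable substitute, at the usual level of detail for the claim that roots of powers of the boundary word $\bar h$ lie in $\langle\bar h\rangle$), and then pull $g$ back into a fibred solid torus neighbourhood of an exceptional fibre. Up to that point the argument is sound.

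But the ``main obstacle'' you flag is a genuine gap, and you are right to suspect it cannot be closed: the statement as literally given is false. If $t$ is the core of an exceptional fibre of multiplicity $\alpha_i$, so that $c=t^{\pm\alpha_i}$ in $\pi=\pi_1(N)$, take $g=t^{\ell}$ with $|\ell|\geq 2$ and $\gcd(\ell,\alpha_i)=1$ (e.g.\ $N$ Seifert fibred over $D^2(5,7)$, $t$ the core of the multiplicity-$5$ fibre, $g=t^2$). Then $p_*(g)$ is a non-trivial torsion element, hence does not lie in the torsion-free group $p_*(\pi_1(T))=\langle \bar h\rangle$, so $g\notin\pi_1(T)$; on the other hand $g^{\alpha_i}=c^{\pm\ell}\in\pi_1(T)$. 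Yet $g^d=t^{\ell d}$ can never equal $c^{\pm1}=t^{\pm\alpha_i}$, since $\ell d=\pm\alpha_i$ has no integer solution. The paper's own proof is equally terse at exactly this point (``it follows from the definition of $s$ that $g^d=c^{\pm1}$''), so the defect is in the statement rather than in your reading of it. The conclusion your argument actually delivers --- and which should replace the stated one --- is: there exists $d\leq s$ with $g^d\in\langle c\rangle$, i.e.\ $g^d=c^{a}$ for some $a\neq 0$ (with $\gcd(a,d)=1$ if $d$ is chosen minimal). This weaker form is all that the paper's later applications require; for instance in the proof of Theorem~\ref{thm:divisible} one only needs $|a|\geq 1$ to get $|n|=|m|r/|a|\leq |m|r\leq ds$, and in Case 4 of Theorem~\ref{thm:commute} one only needs that powers of the fibre lie in every boundary torus. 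So rather than trying to force $a=\pm1$, restate the conclusion as $g^d\in\langle c\rangle$; with that change your proof is complete.
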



\begin{proof}
Let $N$ be a Seifert fibered 3-manifold with boundary. Let $s$ be the maximum order of a singular fiber of the fibration.
Let $T$ be a boundary component, and let  $g\not\in \pi_1(T)$ such that  some power of $g$ lies in $\pi_1(T)$.
We denote by $p:N\to B$ the projection to the base orbifold. We denote by $b$ the boundary curve of $B$ corresponding to $T$.
Note that $p(g)\not\in \ll b\rr$ but a power of $p(g)$ lies in $\ll b\rr$. It follows easily from \cite[Remark~II.3.1]{JS79}
that $p(g)$ is of finite order. In particular $g$ corresponds to a singular fiber, and then it follows from the definition of $s$ that there exists a $d\leq s$ such that $g^d=c$ or $g^d=c^{-1}$.
\end{proof}

\begin{lemma}\label{lem:centbdy}
Let $N$ be a Seifert fibered 3-manifold with regular fiber $c$ and let $T$ be a boundary component.
Let $g\in \pi_1(T)$ which is not a power of $c$, then $C_g(\pi_1(N))=\pi_1(T)$.
\end{lemma}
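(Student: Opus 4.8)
The plan is to push the statement down to the base orbifold, in the same spirit as the proof of Theorem~\ref{thm:sfs}. One inclusion is free: since $\pi_1(T)\cong\Z^2$ is abelian and contains $g$, we immediately get $\pi_1(T)\subseteq C_{\pi_1(N)}(g)$, so all the work is in the reverse inclusion.

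For that, I would use the projection $p\colon N\to B$ to the base orbifold, with $b$ the boundary curve of $B$ corresponding to $T$. The facts I would rely on are that $\ker(p_*)$ is the fiber subgroup $\langle c\rangle$, that $c\in\pi_1(T)$ (so $\langle c\rangle\subseteq\pi_1(T)$), and that, because the Seifert fibration restricted to $T$ is a product, $p_*$ carries $\pi_1(T)$ onto $\langle b\rangle\subseteq\pi_1(B)$. Given $h\in C_{\pi_1(N)}(g)$, the hypothesis that $g$ is not a power of $c$ says exactly that $g\notin\ker(p_*)$, so $p_*(g)$ is a nontrivial element of $\langle b\rangle$, say $p_*(g)=b^k$ with $k\neq0$; applying $p_*$ to $gh=hg$ then shows $p_*(h)$ centralizes $b^k$ in $\pi_1(B)$. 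The heart of the proof would be the orbifold‑group input $C_{\pi_1(B)}(b^k)=\langle b\rangle$ for $k\neq0$ — i.e.\ that the peripheral subgroup generated by a boundary curve is malnormal and maximal cyclic in $\pi_1(B)$ — which is of the same nature as the fact from \cite[Section~II.3]{JS79} used for Theorem~\ref{thm:sfs}. Granting it, $p_*(h)=b^m$ for some $m$; picking $t_0\in\pi_1(T)$ with $p_*(t_0)=b^m$ (possible since $p_*(\pi_1(T))=\langle b\rangle$) gives $ht_0^{-1}\in\ker(p_*)=\langle c\rangle\subseteq\pi_1(T)$, hence $h=(ht_0^{-1})\,t_0\in\pi_1(T)$, finishing the argument.

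The step I expect to be the main obstacle is precisely this orbifold‑group input $C_{\pi_1(B)}(b^k)=\langle b\rangle$. It is classical when $B$ is hyperbolic, but it becomes delicate — and the peripheral subgroup need not be maximal cyclic — for the small base orbifolds: for instance if $N$ is the twisted $I$‑bundle over the Klein bottle fibered over the M\"obius band, a boundary curve is $b=t^2$ for the core $t$, so $C_{\pi_1(B)}(b)=\langle t\rangle\supsetneq\langle b\rangle$. So, exactly as for Theorem~\ref{thm:sfs}, one should either invoke the standing hypotheses that exclude these cases, or handle separately the finitely many Seifert fibered spaces with non‑unique fibration ($N$ covered by $S^3$, $S^2\times\R$ or the $3$‑torus, $N=S^1\times D^2$, or an $I$‑bundle over the torus or the Klein bottle), where $\pi_1(N)$ is abelian or virtually abelian and the claim can be checked — or seen to be vacuous — by a direct computation.
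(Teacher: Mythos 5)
Your proposal is essentially the paper's own argument: the paper also projects to the base orbifold via $p_*$, invokes (citing \cite[Remark~II.3.1]{JS79}) that the centralizer of $p_*(g)$ in $\pi_1(B)$ is the cyclic group generated by the boundary curve corresponding to $T$, and then pulls back using $\ker(p_*)=\langle c\rangle\subseteq\pi_1(T)$. Your closing caveat about the small base orbifolds (e.g.\ the twisted $I$-bundle over the Klein bottle fibered over the M\"obius band, where the peripheral subgroup of $\pi_1(B)$ is not maximal cyclic and the stated equality genuinely fails) is a real issue that the paper's one-line proof silently glosses over, so your more careful version is if anything an improvement.
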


\begin{proof}
We denote by $p:N\to B$ the projection to the base orbifold. Note that $p(g)\in \pi_1(B)$ is non-trivial. It follows easily from \cite[Remark~II.3.1]{JS79} that $C_{p(g)}(\pi_1(B))$ is the group
generated by the boundary curve of $N$ corresponding to $T$. It follows easily that  $C_g(\pi_1(N))=\pi_1(T)$.
\end{proof}

The following lemma is also well-known. It can be proved in a similar fashion as Lemma \ref{lem:centbdy} by considering the equivalent problem in the fundamental group of the base manifold.

\begin{lemma}\label{lem:sfsmalnormal}
Let $N$ be a Seifert fibered 3-manifold. Denote by $c\in \pi_1(N)$ the element represented by a regular fiber.
\bn
\item Let $T$ be a boundary torus  and $g\in \pi_1(N)\sm \pi_1(T)$, then  $\pi_1(T)\cap g\pi_1(T)g^{-1}=\ll c \rr$.
\item  Let $T_1$ and $T_2$ be distinct boundary tori. Then  for any $g\in \pi_1(N)
$ we have
$\pi_1(T_1)\cap g\pi_1(T_2)g^{-1}=\ll c \rr$.
\en
\end{lemma}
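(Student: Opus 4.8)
The plan is to follow the strategy indicated in the remark preceding the lemma and transfer the problem to the base orbifold, exactly as in the proof of Lemma~\ref{lem:centbdy}. Write $p\colon N\to B$ for the projection to the base orbifold. Since $N$ has non-empty boundary it is in particular non-spherical, so the fiber subgroup $\ll c\rr$ is infinite cyclic and normal, and one has a short exact sequence $1\to \ll c\rr\to\pi_1(N)\to \pi_1(B)\to 1$ with the last map $p_*$ and kernel $\ll c\rr$. For each boundary torus $T_i$ of $N$ the Seifert fibration restricts to a product structure on $T_i$, so (as already recorded before the theorems) $c\in\pi_1(T_i)$ and $p_*(\pi_1(T_i))=\ll b_i\rr$, where $b_i$ denotes the boundary curve of $B$ corresponding to $T_i$; a direct check using the section of $p$ over $T_i$ moreover shows $\pi_1(T_i)=p_*^{-1}(\ll b_i\rr)$.

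First note that $\ll c\rr$ is contained in both intersections in the statement: indeed $c\in\pi_1(T_1)\cap\pi_1(T_2)$ (in case (1) take $T_1=T_2=T$), and $\ll c\rr$ is normal, hence $\ll c\rr=g\ll c\rr g^{-1}\subseteq g\pi_1(T_2)g^{-1}$. For the reverse inclusion apply $p_*$: using $\pi_1(T_i)=p_*^{-1}(\ll b_i\rr)$ and $\ll c\rr=\ker p_*$ we get
\[ \pi_1(T_1)\cap g\pi_1(T_2)g^{-1}=p_*^{-1}\big(\ll b_1\rr\cap p_*(g)\ll b_2\rr p_*(g)^{-1}\big).\]
The key input is the analogue for $2$-orbifold groups of Lemma~\ref{lem:malnormal}: for boundary curves $b_1,b_2$ of $B$ and any $\gamma\in\pi_1(B)$ one has $\ll b_1\rr\cap\gamma\ll b_2\rr\gamma^{-1}=\{e\}$ unless $b_1=b_2$ and $\gamma\in\ll b_1\rr$. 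Granting this, in case (2) (where $b_1\ne b_2$) the intersection downstairs is trivial, so the intersection upstairs is $p_*^{-1}(\{e\})=\ll c\rr$. In case (1), if the intersection downstairs were non-trivial we would get $p_*(g)\in\ll b\rr$, hence $g\in p_*^{-1}(\ll b\rr)=\pi_1(T)$, contradicting the hypothesis $g\notin\pi_1(T)$; so again the intersection downstairs is trivial and the claim follows.

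I expect the orbifold-level malnormality statement, rather than the group-theoretic bookkeeping (which is routine preimage chasing), to be the only real point requiring attention. It follows from the standard description of $\pi_1(B)$ and its peripheral structure, cf.\ \cite[Remark~II.3.1]{JS79} --- precisely the fact already invoked in the proof of Lemma~\ref{lem:centbdy} to compute $C_{p(g)}(\pi_1(B))$. One must be slightly careful with small exceptional base orbifolds --- for instance a disk with two cone points of order two, where the boundary curve can be conjugated to its own inverse --- but these correspond exactly to the exceptional Seifert fibered manifolds listed in the Remark in the introduction; for the genuine Seifert fibered pieces occurring in a non-trivial JSJ decomposition the base orbifold has negative orbifold Euler characteristic, $\pi_1(B)$ acts properly discontinuously on $\mathbb{H}^2$, and malnormality of the peripheral subgroups is immediate.
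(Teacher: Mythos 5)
Your overall route is exactly the one the paper intends: the paper offers no written proof, only the remark that the lemma ``can be proved in a similar fashion as Lemma \ref{lem:centbdy} by considering the equivalent problem in the fundamental group of the base manifold,'' and your reduction via the exact sequence $1\to\ll c\rr\to\pi_1(N)\to\pi_1(B)\to 1$, the identification $\pi_1(T_i)=p_*^{-1}(\ll b_i\rr)$, and the preimage computation is a correct and complete implementation of that reduction. You have also correctly isolated the actual mathematical content, namely malnormality of the peripheral subgroups $\ll b_i\rr$ in the $2$-orbifold group.

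The weak point is your last paragraph. The exceptional base orbifolds cannot be dismissed by saying that Seifert pieces of a non-trivial JSJ decomposition have hyperbolic base: the twisted $I$-bundle over the Klein bottle does occur as a JSJ piece (glue it along its boundary torus to, say, a hyperbolic piece; the resulting torus is essential and the decomposition is minimal), and its base orbifold (Möbius band, or $D^2(2,2)$) is Euclidean. For that manifold the lemma as stated actually fails: $\pi_1(T)$ has index two in $\pi_1(N)$, hence is normal, so for $g\notin\pi_1(T)$ one gets $\pi_1(T)\cap g\pi_1(T)g^{-1}=\pi_1(T)\cong\Z^2\neq\ll c\rr$ (and $T^2\times I$ similarly violates part (2)). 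So this is not merely a gap in your write-up --- the statement itself needs these small Seifert manifolds excluded, and any correct proof must say so rather than fold them into the generic case. Your argument is fine once one adds the hypothesis that $N$ is none of the manifolds in the Remark of the introduction (equivalently, that $B$ is a hyperbolic $2$-orbifold); in that case one should still justify, rather than call ``immediate,'' that the stabilizer of the axis of a boundary element $b_i$ in the discrete group $\pi_1(B)\subset\operatorname{Isom}(\mathbb{H}^2)$ is exactly $\ll b_i\rr$ --- this holds because that axis is a boundary geodesic of the convex core, so no elliptic element can fix a point on it, but it is the one place where a genuinely geometric fact is being used.
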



We conclude with the following lemma.

\begin{lemma}\label{lem:sfsdivisible}
Let $N$ be a non-spherical Seifert fibered manifold. Then $\pi_1(N)$ does not contain non-trivial elements which are infinitely divisible.
\end{lemma}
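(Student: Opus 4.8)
The plan is to reduce, via a suitable finite cover, to the well-known fact that the fundamental group of a compact surface has no non-trivial infinitely divisible element, and then to run a short d\'evissage through the central extension describing the circle fibration of that cover.

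First I collect the structural inputs. Since $N$ is non-spherical, the fiber subgroup $\ll c\rr\subset\pi:=\pi_1(N)$ is infinite cyclic and normal; since $N$ is also irreducible and orientable, $\pi$ is torsion-free ($N$ is aspherical with universal cover $\R^3$). Recall that every Seifert fibered $3$-manifold is finitely covered by an $S^1$-bundle over a compact surface. Passing to a further finite cover (and, if needed, pulling back along the orientation double cover of the base, noting that both operations preserve the property of being an $S^1$-bundle over a surface), we may assume that $\ti N\to N$ is a regular cover of some degree $\mu$ and that the base surface $\Sigma$ of the $S^1$-bundle $\ti N$ is orientable. Then $\ti N$ and $\Sigma$ are both orientable, so the bundle monodromy acts trivially on the fiber, and hence $P:=\pi_1(\ti N)$ sits in a central extension
\[ 1\longrightarrow Z\longrightarrow P\longrightarrow S\longrightarrow 1 ,\]
where $Z$ is an infinite cyclic subgroup of the fiber subgroup $\ll c\rr$, say $Z=\ll c^e\rr$ with $e\geq 1$, and $S$ is a subgroup of $\pi_1(\Sigma)$. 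As $\Sigma$ is a compact orientable surface, $S$ is free, free abelian of rank at most two, or a closed surface group; in particular $S$ is torsion-free and has no non-trivial infinitely divisible element.

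Now suppose, for a contradiction, that $g\in\pi$ is non-trivial and infinitely divisible, say $g=h_i^{\,n_i}$ with integers $n_1<n_2<\cdots$ and $h_i\in\pi$. Since $P$ is normal of index $\mu$, every $\mu$-th power lies in $P$; so $\gamma:=g^\mu$ and $f_i:=h_i^{\,\mu}$ lie in $P$, and
\[ \gamma \;=\; g^\mu \;=\; (h_i^{\,n_i})^\mu \;=\; (h_i^{\,\mu})^{n_i} \;=\; f_i^{\,n_i}\qquad(i=1,2,\dots).\]
Because $\pi$ is torsion-free and $g\neq 1$, we have $\gamma\neq 1$. Projecting the last identity to $S$ exhibits the image of $\gamma$ in $S$ as an infinitely divisible element; as $S$ has no non-trivial such element, this image is trivial, i.e.\ $\gamma\in Z=\ll c^e\rr$. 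Write $\gamma=c^k$; then $k\neq 0$ since $\gamma\neq 1$ and $c$ has infinite order. Moreover, for each $i$ the image of $f_i$ in $S$ has $n_i$-th power equal to the trivial image of $\gamma$, so --- $S$ being torsion-free --- $f_i\in Z=\ll c^e\rr$, say $f_i=c^{\,eb_i}$ with $b_i\in\Z$. Then $c^k=\gamma=f_i^{\,n_i}=c^{\,eb_in_i}$, and since $c$ has infinite order, $k=eb_in_i$; thus $n_i\mid k$ for every $i$. This contradicts $n_i\to\infty$ for a fixed non-zero integer $k$, and the lemma follows.

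The one step that is not a formal manipulation is the structural reduction in the second paragraph: one invokes the classical theorem that a Seifert fibered space is finitely covered by an $S^1$-bundle over a surface, and then arranges the cover to be regular (so $\mu$-th powers land in $P$) and the base orientable (so that $\pi_1(\ti N)$ is genuinely a \emph{central} extension of a surface group by $\Z$, with the central $\Z$ inside the fiber subgroup). Given this, the rest uses only the torsion-freeness of $\pi_1(N)$ and the elementary fact that free groups, free abelian groups of finite rank, and surface groups contain no non-trivial infinitely divisible elements --- which, for the nonabelian ones, is immediate from the fact that the centralizer of any non-trivial element is infinite cyclic.
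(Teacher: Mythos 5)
Your proof is correct and follows essentially the same route as the paper's: pass to a finite cover that is an $S^1$-bundle over a surface, use the resulting extension of a surface group by $\mathbb{Z}$ (neither of which has non-trivial infinitely divisible elements), and return to $\pi_1(N)$ via torsion-freeness and a power-of-the-index argument. You merely spell out details the paper leaves as "easy" (regularity of the cover, orientability of the base, torsion-freeness of the quotient in the d\'evissage), all of which check out.
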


\begin{proof}
Let $N$ be a  Seifert fibered manifold. Then there exists a finite cover $N'$ which is an $S^1$-bundle over a surface $S$ (see e.g. \cite[p.~391]{He87} for details). We write $\G=\pi_1(S)$, $\pi=\pi_1(N)$ and $\pi'=\pi_1(N')$. If $N$ is non-spherical
then the long exact sequence in homotopy implies that there exists a short exact sequence
\[ 1\to \Z \to \pi'\to \G\to 1.\]
Since $\Z$ and $\G$ are well-known not to admit any non-trivial infinitely divisible elements, it follows easily that $\pi'$ does not admit a non-trivial infinitely divisible element.
We write $n=[\pi:\pi']$.
Since $N$ is non-spherical we know that $\pi$ is torsion-free.  Note that if $g\in \pi$ is non-trivial, then $g^n$ lies in $\pi'$ and it is also non-trivial. It is now easy to see that
$\pi$ can not admit a non-trivial infinitely divisible element either.
\end{proof}

\subsection{3-manifolds and graphs of groups}

In this section we recall the well-known interpretation of 3-manifold groups as the fundamental group of a graph of groups.
Let $N$ be an irreducible, closed, oriented 3-manifold.
Recall that the JSJ tori are a minimal collection $\{T_1,\dots,T_k\}$ of tori such that the complements of the tori are either atoroidal or Seifert fibered.

We denote by $\scrg(N)$ the corresponding JSJ graph,
i.e.  the vertex set $V=V(\scrg)$ of $\scrg$ consists of the set of components of $N$ cut along $T_1,\dots,T_k$ pieces and the set $E=E(\scrg)$ of (unoriented) edges consists of the set of JSJ tori $T_1,\dots,T_k$.
We sometimes denote the JSJ tori by $T_e,e\in E$ and we denote the components of $N$ cut along $\cup_{e\in E} T_e$ by $N_v, v\in V$. We equip each $T_e$ with an orientation, we thus obtain
two canonical embeddings $i_{\pm}$ of $T_e$ into $N$ cut along $T_e$.
We then denote by $o(e)\in V$ the unique vertex with $i_-(T_e)\in N_{i(e)}$ and
we denote by $t(e)\in V$ the unique vertex with $i_+(T_e)\in N_{f(e)}$.

Suppose that $N$ has a non-trivial JSJ decomposition.
Then given a Seifert fibered component $N_v$ of the JSJ decomposition of $N$
we denote by  $c_{v}\in \pi_1(N_v)$ the  group element defined by a corresponding  regular fiber.
Note that $c_v$ is well-defined up to inversion (see \cite[Lemma~1]{To78} or \cite{GH75}).

We conclude this section with the following theorem.

\begin{theorem}\label{thm:adjsfs}
Let $N$ be a closed, oriented 3-manifold.  Denote by $\GG=\GG(N)$ the corresponding JSJ graph. If $e$ is an edge such that $o(e)$ and $t(e)$ correspond to  Seifert fibered spaces,
then $\varphi_{e}^{-1}(c_{t(e)})\ne c_{o(e)}^{\pm 1}$.
\end{theorem}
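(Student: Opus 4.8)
The plan is to argue by contradiction, using the characterizing \emph{minimality} of the JSJ decomposition: no proper subcollection of the JSJ tori cuts $N$ into pieces that are each atoroidal or Seifert fibered. Write $T=T_e$ for the JSJ torus corresponding to the edge $e$, and set $v=o(e)$, $w=t(e)$, both Seifert fibered by hypothesis. Since $T$ is a boundary torus of the pieces $N_v$ and $N_w$, the observation preceding Theorem~\ref{thm:sfs} shows that the regular fibers $c_v\in\pi_1(N_v)$ and $c_w\in\pi_1(N_w)$ lie in the images of the two copies of $\pi_1(T)$; hence $\varphi_e^{-1}(c_w)$ and $\varphi_{\overline e}^{-1}(c_v)$ are well-defined elements of $G_e=\pi_1(T)$, and (identifying $c_{o(e)}$ with $\varphi_{\overline e}^{-1}(c_{o(e)})$) the claim is exactly that $\varphi_e^{-1}(c_w)\neq\varphi_{\overline e}^{-1}(c_v)^{\pm 1}$ in $\pi_1(T)$. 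So I assume for contradiction that $\varphi_e^{-1}(c_w)=\varphi_{\overline e}^{-1}(c_v)^{\pm1}$.

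Under this assumption the Seifert fibration of $N_v$ and that of $N_w$ restrict, on the two copies of $T$ in their boundaries, to $S^1$-fibrations whose fibers are isotopic in $T$. An essential simple closed curve on a torus determines the fibration by its parallel translates uniquely up to isotopy, so the gluing homeomorphism used to reassemble $N_v$ and $N_w$ along $T$ may be isotoped to be fiber-preserving; since a fiber-preserving gluing of Seifert fibered spaces along boundary tori is again Seifert fibered, the piece $N_v\cup_T N_w$ (or, if $e$ is a loop and $v=w$, the result of self-gluing $N_v$ along the two copies of $T$) inherits a Seifert fibration. Now delete $T$ from the collection of JSJ tori: the complementary pieces of the resulting smaller collection are the $N_u$ with $u\neq v,w$, which are unchanged and hence still atoroidal or Seifert fibered, together with the single new piece $N_v\cup_T N_w$, which is Seifert fibered. (If $T$ were the only JSJ torus this already says $N$ itself is Seifert fibered, contradicting that its JSJ decomposition is non-trivial.) Thus a proper subcollection of the JSJ tori cuts $N$ into atoroidal or Seifert fibered pieces, contradicting minimality; this proves $\varphi_e^{-1}(c_{t(e)})\neq c_{o(e)}^{\pm1}$.

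The step requiring genuine care is the passage from ``the fibers agree in $\pi_1(T)$'' to ``the two Seifert fibrations glue up across $T$'': one needs that the slope of a regular fiber on a boundary torus determines the induced $S^1$-fibration of that torus up to isotopy, that a self-homeomorphism of $T$ fixing this slope up to sign can be isotoped to respect the fibration, and that a fiber-preserving gluing of Seifert fibered spaces is Seifert fibered. These are standard facts from the foundational theory of Seifert fibered spaces (see e.g.\ \cite{JS79, Sc83}); I would invoke them rather than reprove them. I do not anticipate difficulty with the degenerate cases, since parallel tori and solid-torus pieces do not occur in a JSJ decomposition of a closed irreducible $3$-manifold, and in any event the minimality argument above is insensitive to them.
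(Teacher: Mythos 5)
Your proof is correct and is essentially the paper's own argument: the paper likewise assumes $\varphi_{e}^{-1}(c_{t(e)})=c_{o(e)}^{\pm 1}$, observes that the two Seifert fibrations would then match (after isotopy) along the edge torus, and derives a contradiction with the minimality of the JSJ collection. You have merely spelled out, with appropriate citations, the standard facts that the paper leaves implicit in its two-sentence proof.
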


\begin{proof}
If  $\varphi_{e}^{-1}(c_{t(e)})$ was equal to $c_{o(e)}^{\pm 1}$, then $N_{o(e)}$ and $N_{t(e)}$ would have Seifert fiber structures which (after an isotopy) match along the edge torus.
But this contradicts the minimality of the JSJ decomposition.
\end{proof}

\section{Proof of the main results}

\subsection{Divisibility in 3-manifold groups}

We will first prove the following theorem.

\begin{theorem}\label{thm:divisible}
Let $N$ be a 3-manifold. If $N$ is not spherical, then $\pi_1(N)$ does not contain any non-trivial elements which are infinitely divisible.
\end{theorem}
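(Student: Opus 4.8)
The plan is to reduce to the two geometric pieces already handled in the excerpt, namely hyperbolic pieces (Lemma \ref{lem:hypdivisible}) and Seifert fibered pieces (Lemma \ref{lem:sfsdivisible}), by passing through the graph-of-groups structure of the JSJ decomposition. If $N$ is Seifert fibered, the statement is exactly Lemma \ref{lem:sfsdivisible}, and if $N$ is hyperbolic it follows from Proposition \ref{prop:hypbasics}(1) together with Lemma \ref{lem:hypdivisible}. So I would assume $N$ has a non-trivial JSJ decomposition, write $\pi=\pi_1(N)=\pi_1(\GG)$ for the JSJ graph of groups $\GG=\GG(N)$, and argue by contradiction: suppose $g\in\pi$ is non-trivial and infinitely divisible.

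First I would use the length function $cl$ on $\pi_1(\GG)$ from Section \ref{sec:pi1}. If $g$ is divisible by $n$, say $g=h^n$, then $cl(g)=cl(h^n)=n\cdot cl(h)$; since $cl(g)$ is a fixed finite non-negative integer, divisibility by arbitrarily large $n$ forces $cl(h)=0$ for all the relevant roots $h$, and hence $cl(g)=0$. Thus $g$ is conjugate into a vertex group $\pi_1(N_v)$, and after conjugating we may assume $g\in\pi_1(N_v)$. The next step is to show that the roots $h$ can also be taken inside $\pi_1(N_v)$: if $h^n=g\in\pi_1(N_v)$ with $cl(h)=0$, then $h$ lies in some vertex group, and I would use the standard fact (Bass--Serre theory, or the structure of reduced loops) that if two vertex-group elements are conjugate in $\pi_1(\GG)$ then either they lie in the same vertex group up to conjugacy, or both are conjugate into an edge group. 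The first alternative, after a harmless conjugation fixing $\pi_1(N_v)$, puts all roots in $\pi_1(N_v)$, and then infinite divisibility of $g$ inside $\pi_1(N_v)$ contradicts Lemma \ref{lem:hypdivisible} (if $N_v$ is atoroidal, hence hyperbolic by Geometrization, using Proposition \ref{prop:hypbasics}(1)) or Lemma \ref{lem:sfsdivisible} (if $N_v$ is Seifert fibered). Note $N_v$ is non-spherical since it has non-empty toroidal boundary.

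The remaining, and I expect the main, obstacle is the second alternative: the element $g$ — equivalently, infinitely many of its roots $h$ — is conjugate into an edge group $\pi_1(T_e)\cong\Z^2$. Here I would choose a boundary torus $T_e$ of $N_v$ into which $g$ is conjugate and work inside $\pi_1(N_v)$. If $N_v$ is hyperbolic, $\pi_1(T_e)\subset\pi_1(N_v)$ is division closed by Lemma \ref{lem:divisionclosed}, so all roots of $g$ computed in $\pi_1(N_v)$ lie in $\pi_1(T_e)\cong\Z^2$, which has no non-trivial infinitely divisible element — contradiction. If $N_v$ is Seifert fibered, I would use Theorem \ref{thm:sfs}: a root $h$ of $g$ that does not lie in $\pi_1(T_e)$ must satisfy $h^d=c_v^{\pm1}$ for some $d\le s$; combined with Theorem \ref{thm:adjsfs} (to rule out the fiber matching the neighboring piece's fiber or an edge-group structure forcing further divisibility) and Lemma \ref{lem:sfsdivisible} applied to $\pi_1(N_v)$, this bounds the divisibility of $g$. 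The delicate point throughout is bookkeeping the conjugating elements so that ``divisibility in $\pi$'' genuinely descends to ``divisibility in the vertex or edge group'', which is where the precise reduced-path normal form from Section \ref{section:groupgraph} does the work.
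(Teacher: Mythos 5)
Your proposal follows essentially the same route as the paper's proof: use the length function $cl$ on the JSJ graph of groups to force an infinitely divisible element and its roots into vertex groups, dispose of same-vertex roots with Lemmas \ref{lem:hypdivisible} and \ref{lem:sfsdivisible}, and handle roots lying in an adjacent piece via division-closedness of peripheral subgroups of hyperbolic pieces (Lemma \ref{lem:divisionclosed}) and the singular-fiber bound of Theorem \ref{thm:sfs} together with Theorem \ref{thm:adjsfs} for Seifert pieces. The bookkeeping you defer --- writing a root as $p*z*p^{-1}$ with $p$ a minimal reduced path, showing $p$ must have length one, and thereby converting $g=h^{n}$ into an $m$-th power equation back inside the original vertex group --- is exactly what the paper's reduced-path argument supplies, yielding the explicit bound $n\le ds$.
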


\begin{proof}
Let $N$ be a non-spherical 3-manifold and let $x\in \pi_1(N)$ be a non-trivial element.
Since the statement of theorem is independent of the choice of base point and conjugation we can without loss of generality assume that $l(x)=cl(x)$. We write  $l=l(x)$.

First suppose that $l>0$.
Suppose we have $y\in \pi_1(N)$ and $n$ such that $y^n=x$.
Note that $0<cl(x)=cl(y^n)=n\cdot cl(y)$. It now follows immediately
that $n\leq l=cl(x)$.

Now suppose that $l=0$. Note that this means that $x$  lies  in a vertex group $\pi_1(N_w)$.
We now define
\[ d:= \max\{ n\in \N\, |\, x=y^n\mbox{ for some }y\in \pi_1(N_w)\}.\]
Note that $d<\infty$ by  Lemmas \ref{lem:hypdivisible} and \ref{lem:sfsdivisible}.
Furthermore, given a Seifert fibered component $N_v$ we define
\[ s_v:= \mbox{maximum  of the  orders of the  singular fibers of $N_v$}. \]
Finally we define $s$ to be the maximum over all $s_v$. If there are no Seifert fibered components, then  we set $s=1$.
The following claim now implies the theorem.

\begin{claim}
If there exists $y\in \pi_1(N)$ and $n\in \N$ with $y^n=x$, then $n\leq ds$.
\end{claim}

Suppose we have $y\in \pi_1(N)$ and $n$ such that $y^n=x$.
Note that $0=l(x)=cl(x)=cl(y^n)=n\cdot cl(y)$. It now follows that $cl(y)=0$.
If $l(y)=0$, then $y\in \pi_1(N_w)$, hence the conclusion holds trivially by the definition of $d$.
Now suppose that $l(y)>0$. Then there exists a reduced path $p=(g_0,e_1,g_1,\dots,e_l,g_l)$ from $w$ to a vertex $v$
and $z\in \pi_1(N_v)$ such that $y$ is represented by $p*z*p^{-1}$. Among all such pairs $(p,z)$ we  pick a pair which minimizes the length of $p$.

Since $p$ is minimal and $l(p)>0$ we see that $g_lzg_l^{-1}\not\in \Im(\varphi_{e_l})$.
On the other hand  $p*z^n*p^{-1}$ represents $y^n=x$, hence this path is reduced, which implies that $g_lz^ng_l^{-1}\in \Im(\varphi_{e_l})$. It follows that $ \Im(\varphi_{e_l})$ is not division closed,
using  Lemma \ref{lem:divisionclosed} we conclude that $N_v$ is Seifert fibered.

We denote by $c_v$ the regular fiber of $N_v$.
 Recall that by Theorem \ref{thm:sfs}   there exists $r|s_v$ with  $g_lz^{r}g_l^{-1}=c_v$.
It also follows from Theorem \ref{thm:sfs}
that $g_lz^{n}g_l^{-1}=c_v^m\in \Im(\varphi_{e_l})$ for some $m$.
Note that $n=mr$.

We can now apply Lemmas \ref{lem:malnormal} and \ref{lem:sfsmalnormal}, Theorem \ref{thm:adjsfs} and the fact that  $p$ is reduced to conclude that
\[  (g_0,e_1,g_1,\dots,e_{l-1},g_{l-1}\varphi_{e_l}^{-1}(c_v^m)g_{l-1}^{-1},e_{l-1}^{-1},\dots,g_1^{-1},e_1^{-1},g_0^{-1})
\]
is reduced. It follows that $l=1$.
Note that
\[x=g_0\varphi_{e_1}^{-1}(c_v^m)g_0^{-1}=\big(g_0\varphi_{e_1}^{-1}(c_v)g_0^{-1}\big)^m.\]
It follows that $m\leq d$. We also have $r\leq s_v\leq s$. We now conclude that $n=mr\leq ds$.

\end{proof}

\subsection{Commuting elements in 3-manifold groups}

\begin{theorem}\label{thm:commute}
Let $N$ be a 3-manifold. Let $x,y\in \pi_1(N)$ with $x=yxy^{-1}$.
Then one of the following holds:
\bn
\item $x$ and $y$ generate a cyclic group in $\pi_1(N)$, or
\item there exists a JSJ torus $T$ such that $x$ and $y$ lie in a conjugate of $\pi_1(T)\subset \pi_1(N)$, or
\item there exists a Seifert fibered component $M$ of the JSJ decomposition  such that $x$ and $y$ lie in a conjugate of $\pi_1(M)\subset \pi_1(N)$.
\en
\end{theorem}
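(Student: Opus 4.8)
The plan is to work with the JSJ graph of groups $\GG=\GG(N)$ and argue by induction on $cl(x)$, which we may assume equals $l(x)$ after conjugating. The point is to reduce the case of commuting elements to the already-understood local pieces (hyperbolic or Seifert fibered vertex groups, edge tori), using the normal-form machinery of Section~\ref{section:groupgraph} together with the malnormality statements of Lemmas~\ref{lem:malnormal} and~\ref{lem:sfsmalnormal}. If $x=1$ then conclusion (1) holds (the group generated by $1$ and $y$ is cyclic), so assume $x\ne 1$.

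First I would treat the base case $l(x)=0$, i.e. $x\in\pi_1(N_w)$ for some vertex $w$. Here I would consider $cl(y)$. If $cl(y)>0$, write $y$ (up to conjugation, matching the conjugate we chose for $x$) in cyclically reduced form and observe that conjugation by $y$ cannot fix the nontrivial length-zero element $x$ unless $x$ lies in the image of the relevant edge group; a careful reduced-word analysis—exactly of the flavour of the argument in the proof of Theorem~\ref{thm:divisible}—forces $x$ to lie in an edge group $\varphi_{e}(G_e)$, hence (after conjugating) in $\pi_1(T)$ for a JSJ torus $T$, and moreover forces $y$ into the same conjugate, giving conclusion (2); the remaining subcase is $cl(y)=0$, so $y$ too lies in a vertex group, and if it lies in $\pi_1(N_w)$ we are in a single vertex group and invoke Proposition~\ref{prop:hypbasics}(5) (hyperbolic case: $C_\pi(x)$ is cyclic or a conjugate of a boundary torus group) or Theorem~\ref{thm:centsfs}/Lemma~\ref{lem:centbdy} and Lemma~\ref{lem:sfsmalnormal} (Seifert case) to land in (1), (2) or (3). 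If $y$ has length zero but lies in a \emph{different} vertex group, one shows via reduced paths that $x$ must again lie in a common edge group.

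For the inductive step $l(x)=l>0$, I would write a cyclically reduced loop $s=(g_0,e_1,g_1,\dots,e_n,g_n)$ representing $x$ with $n=l$. Since $yxy^{-1}=x$, the element $y$ normalizes the cyclic-ish subgroup structure; I would use the fact (stated in Section~\ref{sec:pi1}) that $cl(x^k)=k\,cl(x)$ together with $cl(yxy^{-1})=cl(x)$ to control the combinatorics of $y$ relative to $s$, and argue that $y$ must either be a power of $x$ times something short—pushing us toward conclusion (1)—or conjugate $x$ along the cyclic sequence of edges in $s$, which by the malnormality Lemmas~\ref{lem:malnormal}, \ref{lem:sfsmalnormal} and by Theorem~\ref{thm:adjsfs} (no two adjacent Seifert pieces share a fiber) is impossible unless the edge sequence is essentially trivial, contradicting $l>0$ or collapsing onto a single torus. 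The upshot of this step is that commuting is impossible for $l(x)>0$ except when $y\in\langle x\rangle$ up to the ambiguity that keeps us in (1).

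The main obstacle I expect is the bookkeeping in the inductive step: carefully showing that an element $y$ commuting with a cyclically reduced $x$ of positive length is forced to be (conjugate to) a power of $x$, i.e. controlling how $y$ acts on the bi-infinite periodic word $\cdots x\,x\,x\cdots$. This is the analogue of the fact that in a free product (or more generally a group acting on a tree) the centralizer of a hyperbolic element is virtually cyclic; here one needs the extra geometric input—malnormality of the torus subgroups and the non-matching of Seifert fibrations across JSJ tori—to rule out the degenerate possibilities where an edge group is large enough to allow nontrivial overlap. Once that normal-form lemma is in hand, assembling the three cases is routine.
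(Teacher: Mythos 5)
Your overall architecture --- arrange $cl(x)=l(x)$ by conjugating, split on $l(x)=0$ versus $l(x)>0$, and feed reduced-path analysis into Lemmas \ref{lem:malnormal}, \ref{lem:sfsmalnormal} and Theorem \ref{thm:adjsfs} --- is the same as the paper's, but the two steps you defer are where the entire content lies, and one of them is misstated in a way that would not go through. For $l(x)>0$ you propose to show that ``$y$ must be a power of $x$ times something short.'' That is not the right target statement: if $x=z^n$ with $z$ indivisible, then $y$ may be any power of $z$, and such a $y$ is in general not a power of $x$ multiplied by an edge-group element. The paper's route is to first invoke Theorem \ref{thm:divisible} (finiteness of divisibility, which itself needs the JSJ machinery) to write $x=z^n$ with $z$ indivisible and cyclically reduced, and then to prove that $y$ is literally a power of $z$: the path $p*q^n*p^{-1}$ cannot be reduced, cancellation can occur at only one end (both ends cancelling would contradict $x$ being cyclically reduced), and one peels copies of $q^{\pm1}$ off $p$ by induction; the residual length-zero piece $y'$ is killed because $x^m y' x^{-m}=y'$ would force $l(x^m)\le 2$ for all $m$ by the length-zero analysis, contradicting $l(x^m)=m\,l(x)$. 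Your ``centralizer of a hyperbolic tree element'' heuristic only yields an extension of a subgroup of $\Z$ by the pointwise stabilizer of the axis; killing that stabilizer is precisely the chain of edge-group intersection arguments (malnormality at hyperbolic vertices, $\ll c_v\rr$ at Seifert vertices, then Theorem \ref{thm:adjsfs} to see that consecutive fiber subgroups of the edge torus intersect trivially), and none of this is carried out in your sketch.

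In the base case $l(x)=0$ with $y$ of positive reduced length, your claimed outcome --- $x$ lands in an edge group and ``$y$ is forced into the same conjugate, giving conclusion (2)'' --- is wrong. The correct conclusion (the paper's Case 4) is that the reduced loop for $y$ is forced to have length exactly $2$ with $f_1=\ol{f_2}$ and $h_0\in\Im(\varphi_{\ol{f_1}})$, so that after changing the base point both $x$ and $y$ lie in a common conjugate of a \emph{Seifert fibered vertex group}, i.e.\ conclusion (3); $y$ need not lie in any conjugate of $\pi_1(T)$ (think of $x$ equal to the fiber of an adjacent Seifert piece). Reaching even that requires the three-way case analysis on the terminal vertex (hyperbolic; Seifert with $\varphi_{f_l}^{-1}(x)\notin\ll c\rr$; Seifert with $\varphi_{f_l}^{-1}(x)\in\ll c\rr$) that successively shortens the conjugating path --- the ``careful reduced-word analysis'' you postpone. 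As a plan your proposal points in the right direction, but the two deferred lemmas are the proof, and as currently formulated the positive-length one is aimed at a false intermediate claim.
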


\begin{proof}
Let $N$ be a 3-manifold. Denote by $\GG=\GG(N)$ the corresponding JSJ graph with vertex set $V$ and edge set $E$. We  denote by $w\in V$ the vertex which contains the base point of $N$.
We denote the  vertex groups by $G_v=\pi_1(N_v), v\in V$.

The theorem holds trivially for Seifert fibered spaces, we  can therefore assume that $N$ is not a Seifert fibered space, in particular that $N$ is not spherical.
Suppose we have $x,y\in \pi_1(N)$ with $x=yxy^{-1}$.
By the symmetry of $x$ and $y$ we can without loss of generality assume that $cl(x)\leq cl(y)$.
Note that the statement of the theorem does not change under conjugation and change of base point,
we can therefore without loss of generality  assume that $cl(x)=l(x)$.

We  represent $y$ by a reduced loop
$p=(h_0,f_1,h_1,\dots,f_{l-1},h_{l-1},f_l,h_l)$ based at $w$.
If $l=0$, then $l(x)=0$ as well since $l(x)=cl(x)\leq cl(y)\leq l(y)=0$. In that case we  are  done by Proposition  \ref{prop:hypbasics} (5). We thus henceforth only consider the case that $l\geq 1$.

 After conjugating $x$ and $y$ with $h_l$ we can without loss of generality assume that $h_l=1$.
Recall that $p$ being reduced implies that for $i=2,\dots,l$ the following holds:
\be \label{equ:or} \mbox{ $f_i\ne \ol{f_{i-1}}$ or $f_i=\ol{f_{i-1}}$ and $h_{i-1}\not\in \Im(\varphi_{f_{i-1}})$.}\ee
We first study the case that $l(x)=0$, i.e. $x\in G_w$.
Clearly we can assume that  $x$ is non-trivial.

Now consider
\[ p*x*p^{-1}=(h_0,f_1,h_1,\dots,f_l,x,f_l^{-1},\dots,h_1^{-1},f_1^{-1},h_0^{-1}).\]
This path is not reduced since $yxy^{-1}$ can be represented by a path of length zero.
It follows that $x \in \Im(\varphi_{f_l})$.
We can now represent $x=yxy^{-1}$ by the following path:
\be\label{equ:pathnow} (h_0,f_1,h_1,\dots,f_{l-1},h_{l-1}\varphi_{f_l}^{-1}(x)h_{l-1}^{-1},f_{l-1}^{-1},\dots,h_1^{-1},f_1^{-1},h_0^{-1}).\ee

\noindent \emph{Case 1:} $l=1$, i.e. $y=(h_0,f_1,1)$.
In that case $yxy^{-1}=x$ is represented by $h_0\varphi_{f_1}^{-1}(x)h_0^{-1}$.
It follows that $x\in \Im(\varphi_{f_1})$ and $x\in h_0\Im(\varphi_{\ol{f_1}})h_0^{-1}$.
But if $t(f_1)=o(f_1)$ is hyperbolic this is not possible by Lemma \ref{lem:malnormal}
since the two boundary tori of $N_{t(f_1)}=N_{o(f_1)}$ corresponding to the edge $f_1$ are obviously different.
If $t(f_1)=o(f_1)$ is Seifert fibered, then we can similarly exclude this case by appealing to Lemma \ref{lem:sfsmalnormal} and Theorem \ref{thm:adjsfs}.

\noindent \emph{Case 2:}  The vertex $o(f_{l})$ is hyperbolic.
It follows easily from (\ref{equ:or}) and Lemma \ref{lem:malnormal}
that the path (\ref{equ:pathnow}) is reduced. Since the path represents $x$ this implies in particular that $l=1$.
We thus reduced Case 2 to Case 1.

\noindent \emph{Case 3:}  The vertex $o(f_{l})$ is Seifert fibered and $\varphi_{f_l}^{-1}(x)\not\in \ll c_{o(f_l)}\rr$.
Note that Lemma \ref{lem:sfsmalnormal} together  with Theorem \ref{thm:adjsfs} and  (\ref{equ:or}) implies that  the path (\ref{equ:pathnow}) is reduced, i.e. $l=1$.
We thus also reduced Case 3 to Case 1.

\noindent \emph{Case 4:}  The vertex $o(f_{l})$ is Seifert fibered, $\varphi_{f_l}^{-1}(x)\in \ll c_{o(f_l)}\rr$ and  $l>1$.
Note that by Theorem \ref{thm:sfs} (2) this implies that $h_{l-1}\varphi_{f_l}^{-1}(x)h_{l-1}^{-1}\in \Im(\varphi_{f_{l-1}})$.
We can thus represent $x$ by
\[ (h_0,f_1,\dots,f_{l-2},\,h_{l-2}\cdot \varphi_{f_{l-1}}^{-1}\big( h_{l-1}\varphi_{f_l}^{-1}(x)h_{l-1}^{-1}  \big)\cdot h_{l-2}^{-1},\, f_{l-2}^{-1},\dots,f_1^{-1},h_0^{-1}).\]
If $o(f_{l-1})$ is hyperbolic, then the argument of Case 2 immediately shows that $l=2$.
If $o(f_{l-1})$ is Seifert fibered, then it follows from Theorems \ref{thm:sfs} and \ref{thm:adjsfs} and from Lemma \ref{lem:sfsmalnormal} (2)
that $h_{l-2}\cdot \varphi_{f_{l-1}}^{-1}\big( h_{l-1}\varphi_{f_l}^{-1}(x)h_{l-1}^{-1}  \big)\cdot h_{l-2}^{-1}
\not\in \ll c_{o(f_{l-1})}\rr$. The argument of Case 3  immediately shows that again $l=2$.

We now  showed that $l=2$, we thus see that $x$ equals
\[ h_{0}\cdot \varphi_{f_{l-1}}^{-1}\big( h_{l-1}\varphi_{f_l}^{-1}(x)h_{l-1}^{-1}  \big)\cdot h_{0}^{-1}.\]
If $o(f_1)=t(f_2)$ is hyperbolic, then $x\in \Im(\varphi_{f_2})$
and $x\in h_0\Im(\varphi_{\ol{f_1}})h_0^{-1}$. It follows from Lemma \ref{lem:malnormal}
that $f_1=\ol{f_2}$ and $h_0\in \Im(\varphi_{\ol{f_1}})$. If we change the base point to $o(f_2)=t(f_1)$
we see that $x$ is represented by $\varphi_{f_2}^{-1}(x)\in G_{o(f_2)}$ and $y$ is represented by $\varphi_{f_1}(h_0)h_1\in G_{o(f_2)}$.
If on the other hand $o(f_1)=t(f_2)$ is Seifert fibered, then it follows from Theorem \ref{thm:adjsfs} that $x\not\in \ll c_{t(f_2)}\rr$.
It now follows easily from Lemma \ref{lem:sfsmalnormal} that  $f_1=\ol{f_2}$ and $h_0\in \Im(\varphi_{\ol{f_1}})$. We conclude the argument as above.

We now turn to the case that $l(x)>0$. We claim that Conclusion (1) holds.
By Theorem \ref{thm:divisible} we can find $z\in \pi_1(N)$ which is indivisible and $n>0$ with $x=z^n$.
Without loss of generality  assume that $z$ is cyclically reduced.
We claim that $y$ is a power of $z$ as well.
We represent $z$ by a reduced loop $q=(g_0,e_1,g_1,\dots,e_k,g_k)$. We now consider the path $p*q^n*p^{-1}$ which is given by
\[ (h_0,f_1,h_1,\dots,f_l,h_l\cdot  g_0,e_1,g_1,\dots,e_k,g_k\cdot h_l^{-1},f_l^{-1},\dots,h_1^{-1},f_1^{-1},h_0^{-1}).\]
This loop has to be reduced since $l>0$ and therefore the loop is longer than the loop $q^n$ which represents the same element. We conclude that one of the following conditions hold:
\bn
\item $f_l=\ol{e_1}$ and $h_lg_0\in \Im(\varphi_{f_l})$, or
\item $e_k=f_l$ and $g_kh_l^{-1}\in \Im(\varphi_{e_k})$.
\en
Note though that not both conclusions can hold, otherwise $x$ would not be cyclically reduced.
Now suppose that (1) holds and (2) does not hold. A straightforward induction argument now shows that $p=p'*q^{-1}$ for some reduced path $p'$.
On the other hand, if (2) holds and (1) does not hold, then a straightforward induction argument shows that $p=q^{-1}*p'$ for some reduced path $p'$.

\begin{claim}
If $l(p')=0$, then $p'$ represents the trivial element.
\end{claim}

If $l(p')=0$, then we denote by $y'$ the element represented by $p'$.  Suppose that $y'$ is non-trivial.
 In that case we have $y'x^n(y')^{-1}=x^n$ for any $n$,
in particular $x^ny'x^{-n}=y'$. It follows from the discussion of Cases 1, 2, 3 and 4 above that $l(x^n)\leq 2$
for any $n$. Since $x$ is cyclically reduced and $l(x)>0$ this case can not occur.
This concludes the proof of the claim.

If $p'$ represents the trivial element we are clearly done. If not, then $l(p')>0$
and we can do an induction argument on the length of $p'$ to  show that $y$ is in fact a power of $z$.
\end{proof}

\subsection{Malnormality of peripheral subgroups}

Using the methods of the proof of Theorem \ref{thm:commute}
we can now also prove the following theorem which was first proved by de la Harpe and Weber \cite{HW11}.

\begin{theorem}
Let $N$ be a compact, orientable, irreducible 3-manifold  with toroidal boundary and $S$ a boundary component.
If the JSJ component which contains $S$ is hyperbolic, then $\pi_1(S)\subset \pi_1(N)$ is malnormal.
\end{theorem}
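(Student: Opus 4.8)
The plan is to deduce this from Theorem \ref{thm:commute} together with the hyperbolicity of the JSJ component containing $S$. Let $M$ denote the JSJ component with $S\subset \partial M$, and suppose $M$ is hyperbolic. We must show: if $x\in \pi_1(S)$ is non-trivial and $gxg^{-1}\in \pi_1(S)$ for some $g\in \pi_1(N)$, then $g\in \pi_1(S)$. Set $y:=gxg^{-1}$, so $x$ and $y$ are conjugate and both lie in $\pi_1(S)$; in particular they commute (they lie in the abelian group $\pi_1(S)\cong\Z^2$). So the hypotheses of Theorem \ref{thm:commute} are satisfied by the commuting pair $x$ and $g x g^{-1}$... but that is not quite what we want; instead we should apply Theorem \ref{thm:commute} to the pair $x$ and $g$ only after reducing to the case $x=gxg^{-1}$, which need not hold. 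The cleaner route is: from $x,y\in\pi_1(S)$ conjugate and non-trivial, together with $M$ hyperbolic, first show $g$ normalizes (hence, since $\pi_1(S)$ is malnormal in $\pi_1(M)$ by Lemma \ref{lem:malnormal}, we will want to pin $g$ down directly). So the real plan is to re-run the length argument from the proof of Theorem \ref{thm:commute} with the conjugating element $g$ in place of $y$ and $\pi_1(S)$ in place of the various peripheral groups.

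First I would put everything in the graph-of-groups $\GG=\GG(N)$. The boundary torus $S$ lies in a unique vertex piece $N_{w}$, and after changing the base point we may assume $w$ is the base vertex and $\pi_1(S)\subset G_w=\pi_1(N_w)$, with $N_w=M$ hyperbolic. Since $x\in\pi_1(S)\subset G_w$ we have $l(x)=0$; moreover $x$ is conjugate in $\pi_1(N_w)$ to no element outside $\pi_1(S)$'s own conjugates by malnormality, and $\pi_1(S)$ is division closed in $\pi_1(M)$ by Lemma \ref{lem:divisionclosed}. Write $g$ by a reduced loop $p=(h_0,f_1,h_1,\dots,f_l,h_l)$ based at $w$. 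If $l=0$ then $g\in G_w=\pi_1(M)$, and since $x,gxg^{-1}\in\pi_1(S)$ with $x\neq 1$, malnormality of $\pi_1(S)$ in $\pi_1(M)$ (Lemma \ref{lem:malnormal}) forces $g\in\pi_1(S)$, as desired. So assume $l\ge 1$ and derive a contradiction. After conjugating by $h_l$ (which only conjugates $x$ and $y$ within... careful: conjugating $x$ by $h_l$ moves it out of $\pi_1(S)$ in general, so instead I would \emph{not} normalize $h_l$; I would keep track of $h_l$ and argue at the outermost vertex).

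The heart of the argument: consider $p*x*p^{-1}$, a path of length $2l$ representing $y=gxg^{-1}$, which equals an element of length $0$; hence the path is non-reduced, and working from the inside out, reducedness failures force $x\in \Im(\varphi_{f_l})$, i.e. $x$ lies in the peripheral subgroup of $N_{t(f_l)}$ corresponding to the edge torus $T_{f_l}$. But $x\in\pi_1(S)$ and $S$ is \emph{also} a boundary torus of $N_w=N_{t(f_l)}=M$, distinct from $T_{f_l}$; since $M$ is hyperbolic, Lemma \ref{lem:malnormaltwo}(2) gives $\pi_1(S)\cap \Im(\varphi_{f_l})=\{e\}$ (two distinct boundary tori of a hyperbolic piece, even up to conjugacy, meet trivially), contradicting $x\neq 1$. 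The only gap is the subcase where $t(f_l)$ is the \emph{same} vertex as $w$ but $T_{f_l}$ might coincide with $S$ — however $S$ is a boundary torus of $N$, not a JSJ torus, so $S\neq T_e$ for every edge $e$, and $S$ and $T_{f_l}$ are genuinely distinct boundary components of $M$; and if $t(f_l)\neq w$ we cannot even have $x\in G_{t(f_l)}$ since $x\in G_w$ and distinct vertex groups are "independent" — this is exactly the reducedness obstruction, so one must instead carry $x$ through: push $x$ in $\Im(\varphi_{f_l})$ back via $\varphi_{f_l}^{-1}$ into $G_{o(f_l)}=G_w$'s neighbour and iterate, precisely as in Cases 1–4 of the proof of Theorem \ref{thm:commute}. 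The induction on $l$ terminates and in every case produces an element of $\pi_1(S)$ conjugate into a \emph{different} peripheral subgroup of a hyperbolic or Seifert piece, contradicting Lemmas \ref{lem:malnormal}, \ref{lem:malnormaltwo}, \ref{lem:sfsmalnormal} and Theorem \ref{thm:adjsfs}.

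The main obstacle will be bookkeeping the outermost vertex correctly: unlike in Theorem \ref{thm:commute}, where one freely conjugates $y$ to kill $h_l$, here $x$ is pinned to the specific subgroup $\pi_1(S)\subset \pi_1(M)$ and conjugating by $h_l$ would move $x$; so I expect the one genuinely new point is to observe that the very first reducedness failure of $p*x*p^{-1}$ already gives $x\in h_l\,\Im(\varphi_{f_l})\,h_l^{-1}$ with $h_l\in \pi_1(M)$, and then to combine this with $x\in\pi_1(S)$ and the malnormality/independence of the two distinct boundary tori $S$ and $T_{f_l}$ of the hyperbolic piece $M$ via Lemma \ref{lem:malnormaltwo} to conclude $l=0$ outright — thereby avoiding the full Case 1–4 analysis. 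If that shortcut fails at a Seifert-fibered neighbour $N_{t(f_l)}$ (which can happen only if $t(f_l)\neq w$, i.e. $f_l$ leaves $M$), then $x$ lands in the fiber subgroup $\langle c_{t(f_l)}\rangle$ and Theorem \ref{thm:adjsfs} together with Lemma \ref{lem:sfsmalnormal} propagates the contradiction one edge further, and a straightforward induction on $l$ finishes. In all cases we conclude $l=0$, hence $g\in\pi_1(M)$, hence $g\in\pi_1(S)$ by Lemma \ref{lem:malnormal}, proving malnormality of $\pi_1(S)$ in $\pi_1(N)$.
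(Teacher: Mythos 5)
Your proposal is correct and, once you arrive at your ``shortcut,'' it is exactly the paper's proof: the path $p*x*p^{-1}$ must be reduced, because a failure of reducedness at the innermost junction would put the non-trivial element $x\in\pi_1(S)$ into a conjugate of $\Im(\varphi_{f_l})$, contradicting Lemma \ref{lem:malnormaltwo}~(2) applied to the two distinct boundary tori $S$ and $T_{f_l}$ of the hyperbolic piece $M$, and hence $gxg^{-1}$ has length $2l>0$ and cannot lie in $\pi_1(S)$. The contingencies you hedge against (a Seifert-fibered neighbour $N_{t(f_l)}$, the full Case~1--4 induction) never actually arise, since $f_l$ is the last edge of a loop based at $w$, so $t(f_l)=w$ and the relevant vertex group is always that of the hyperbolic piece $M$.
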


\begin{proof}
Let $N$ be a compact, orientable, irreducible 3-manifold  with toroidal boundary and $S$ a boundary component.
 We denote by $\GG=\GG(N)$ the corresponding JSJ graph with vertex set $V$ and edge set $E$. 
Suppose that the JSJ component $N_w$ which contains $S$ is hyperbolic.
Now let $x\in \pi_1(S)$ and $g\in \pi_1(N)\sm \pi_1(S)$.

We pick a base point on $S$.  We  represent $g$ by a reduced loop
$p=(h_0,f_1,h_1,\dots,f_{l-1},h_{l-1},f_l,h_l)$ based at $w$.
If $l=0$, then $g\in \pi_1(N_w)$, but since $\pi_1(S)\subset \pi_1(N_w)$ is malnormal
by Lemma \ref{lem:malnormal} (1) it follows that $gxg^{-1}\not\in \pi_1(S)$.
Now suppose that $l>0$. We consider the path
\[ p*x*p^{-1}=(h_0,f_1,h_1,\dots,f_l,h_lxh_l^{-1},f_l^{-1},\dots,h_1^{-1},f_1^{-1},h_0^{-1}).\]
This path is reduced if and only if $x \in \Im(\varphi_{f_l})$.
But $\Im(\varphi_{f_l})$ is the image of a boundary torus in $N_w$ distinct from $S$.
It now follows from Lemma \ref{lem:malnormal} (2) that  $h_lxh_l^{-1} \not\in \Im(\varphi_{f_l})$.
We conclude that the path $p*x*p^{-1}$ is reduced, i.e. $gxg^{-1}$ does not lie in $\pi_1(N_w)$, let alone in $\pi_1(S)$.
\end{proof}

\subsection{Proof of Theorem \ref{thm:centgen}}

For the reader's convenience we recall the statement of Theorem \ref{thm:centgen}.

\begin{theorem}
Let $N$ be a  3-manifold. We write $\pi=\pi_1(N)$.
Let $g\in \pi$. If $C_{\pi}(g)$ is non-cyclic, then one of the following holds:
\bn
\item there exists a JSJ torus or a boundary torus $T$ and $h\in \pi$ such that $g\in h\pi_1(T)h^{-1}$ and such that
\[ C_{\pi}(g)=h\pi_1(T)h^{-1},\]
\item there exists a Seifert fibered component $M$ and $h\in \pi$ such that
 $g\in h\pi_1(M)h^{-1}$ and such that
\[ C_{\pi}(g)=hC_{\pi_1(M)}(h^{-1}gh)h^{-1}.\]
\en
\end{theorem}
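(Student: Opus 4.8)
The plan is to deduce Theorem~\ref{thm:centgen} from Theorem~\ref{thm:commute} together with the Seifert fibered and hyperbolic facts collected above. First I would dispose of the trivial cases: if $N$ is Seifert fibered the statement is immediate (conclusion (2) with $M=N$, $h=1$), and if $N$ is hyperbolic it is exactly Proposition~\ref{prop:hypbasics}~(5) (conclusion (1) with $T$ a boundary torus). So we may assume $N$ has a non-trivial JSJ decomposition. Fix $g\in\pi$ with $C_\pi(g)$ non-cyclic; we want to show one of (1),(2) holds. Note that if $g=1$ then $C_\pi(g)=\pi$, but a 3--manifold group with non-trivial JSJ decomposition splits as a non-trivial graph of groups and is therefore never of the required form unless $N$ itself is Seifert fibered — so one must first observe that $\pi$ non-cyclic forces us into case (2) with $M=N$ only when $N$ is Seifert fibered; hence for $N$ with non-trivial JSJ decomposition the case $g=1$ cannot arise because then $C_\pi(g)=\pi$ which is not contained in any single vertex or edge group. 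Actually the cleanest route: assume $g\neq 1$ (the case $g=1$ needs a separate remark, or one simply notes $\pi$ itself is non-cyclic and the theorem as stated would fail — presumably the intended reading excludes it, or handles it by $\pi$ being virtually the fundamental group of a piece only in degenerate cases).

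The heart of the argument runs as follows. Since $C_\pi(g)$ is non-cyclic, pick $y\in C_\pi(g)$ with $\{g,y\}$ not generating a cyclic group; then $x:=g$ and $y$ satisfy $x=yxy^{-1}$ and fall under Theorem~\ref{thm:commute}, so either (b) $g,y$ lie in a conjugate $h\pi_1(T)h^{-1}$ of a JSJ-torus group, or (c) $g,y$ lie in a conjugate $h\pi_1(M)h^{-1}$ of a Seifert fibered vertex group. In case (b), after conjugating we may assume $g\in\pi_1(T)$, and I claim $C_\pi(g)=\pi_1(T)$. One inclusion is clear once we know $\pi_1(T)$ is abelian. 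For the reverse inclusion, take any $z\in C_\pi(g)$; since $g$ and $z$ commute, Theorem~\ref{thm:commute} applied to the pair $(g,z)$ puts them jointly in some conjugate of a vertex or edge group; using malnormality of edge tori inside the adjacent vertex groups (Lemmas~\ref{lem:malnormal}, \ref{lem:sfsmalnormal}) together with the indivisibility/adjacency input of Theorem~\ref{thm:adjsfs}, one shows that $\pi_1(T)$ is in fact its own centralizer, i.e.\ $C_\pi(g)=\pi_1(T)$, giving conclusion (1). (One must be slightly careful: $g$ might be trivial inside $\pi_1(T)$, but we assumed $g\neq1$; also $g$ could be a power of the fiber $c$ of an adjacent Seifert piece — but the malnormality lemmas are stated to handle exactly the intersections $\pi_1(T_1)\cap h\pi_1(T_2)h^{-1}$ and $\pi_1(T)\cap h\pi_1(T)h^{-1}$, so this is manageable.)

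In case (c), after conjugating we may assume $g\in\pi_1(M)$ with $M$ a Seifert fibered JSJ component. I claim $C_\pi(g)=C_{\pi_1(M)}(g)$, which is conclusion (2) with $h=1$. The inclusion $C_{\pi_1(M)}(g)\subseteq C_\pi(g)$ is trivial. For the reverse, let $z\in C_\pi(g)$; we must show $z\in\pi_1(M)$. Apply Theorem~\ref{thm:commute} to $(g,z)$: if conclusion (1) of that theorem holds then $\{g,z\}$ is cyclic, so $z$ is a power of $g$ (or $g$ a power of $z$) — here one needs that if $g$ is conjugate into both $\pi_1(M)$ and $\langle z\rangle$ then $z\in\pi_1(M)$, which follows since $\pi_1(M)$ contains the cyclic group generated by any of its non-trivial elements' roots only up to the singular-fiber ambiguity, controlled by Theorem~\ref{thm:sfs}; more simply, if $g=z^n$ then $z$ centralizes $g$ and $z^n\in\pi_1(M)$, and division-closedness-type arguments (or Theorem~\ref{thm:sfs} plus $g\neq1$) force $z\in\pi_1(M)$. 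If instead $(g,z)$ lands in another conjugate of a vertex or edge group $h'Gh'^{-1}$, then $g$ lies in $\pi_1(M)\cap h'Gh'^{-1}$; when $G$ is a different vertex group or an edge group not adjacent-matching $M$, Lemmas~\ref{lem:malnormal} and \ref{lem:sfsmalnormal} and Theorem~\ref{thm:adjsfs} force this intersection to be contained in $\langle c_M\rangle$, and then one reduces to the previous situation; when $G$ is a neighbouring group sharing an edge torus with $M$, one again uses that the only overlap is along $\pi_1(T_e)$, so either $z$ can be pushed back into $\pi_1(M)$ or $g\in\langle c_M\rangle$ and we are in the well-understood central case. Either way $z\in\pi_1(M)$.

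The main obstacle I anticipate is precisely the bookkeeping in case (c) when $g$ is (conjugate to) a power of a regular fiber $c_M$: then $C_{\pi_1(M)}(g)$ is the full canonical subgroup (Theorem~\ref{thm:centsfs}(1)), which is a large, non-peripheral subgroup of $\pi_1(M)$, and one must confirm that no element of $C_\pi(g)$ escapes $\pi_1(M)$ even though $g$ itself lies in many conjugates of adjacent edge tori. This is exactly where Theorem~\ref{thm:adjsfs} (no two adjacent Seifert pieces match fibers) is essential: it prevents $c_M$ from being simultaneously a fiber of a neighbour, so a commuting element $z$ "leaking" across an edge would violate minimality of the JSJ decomposition. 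Making this airtight — i.e.\ choosing the right conjugating element $h$ and handling the normal-form reductions in the graph of groups as in the proof of Theorem~\ref{thm:commute} — is the part that will require the most care; the rest is formal.
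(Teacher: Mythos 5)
Your overall strategy --- reduce to Theorem \ref{thm:commute} and then analyse the torus case and the Seifert fibered case separately --- is the same as the paper's, but there are two genuine problems. The more serious one is in your case (b): the claim that $C_\pi(g)=\pi_1(T)$ whenever $g$ lies in a conjugate of a JSJ torus group is false. If $T$ abuts a Seifert fibered component $M$ and $g$ is a power of the regular fiber $c_M$ (which lies in $\pi_1(T)$, so this case really occurs), then by Theorem \ref{thm:centsfs}~(1) the centralizer $C_{\pi_1(M)}(g)$ is the entire canonical subgroup of $\pi_1(M)$, which properly contains $\pi_1(T)$; hence $C_\pi(g)\supsetneq \pi_1(T)$ and the correct conclusion is (2) with this $M$, not (1). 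This is exactly the situation your parenthetical remark dismisses as ``manageable''; the malnormality lemmas do not rescue it, since Lemma \ref{lem:sfsmalnormal} says the fiber subgroup is precisely the failure of malnormality. The paper instead proves that $C_\pi(g)=C_{\pi_1(M_i)}(g)$ for one of the two components $M_1,M_2$ abutting $T$: every $h\in C_\pi(g)$ lies in $\pi_1(M_1)$ or $\pi_1(M_2)$; if $g$ is not a power of the fiber $c_i$ of $M_i$, then Lemma \ref{lem:centbdy} gives $C_{\pi_1(M_i)}(g)=\pi_1(T)\subset \pi_1(M_{3-i})$; and Theorem \ref{thm:adjsfs} together with the indivisibility of the fibers in $\pi_1(T)$ guarantees that $g$ cannot be a fiber power on both sides. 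You need this trichotomy, not a blanket ``$\pi_1(T)$ is its own centralizer''.

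The second gap is at the very start: from ``$C_\pi(g)$ is non-cyclic'' you cannot simply ``pick $y\in C_\pi(g)$ with $\langle g,y\rangle$ non-cyclic''. A group can be non-cyclic while any two of its elements generate a cyclic subgroup (e.g.\ $\Q$), so the existence of such a $y$ requires an argument. The paper supplies it: if every $h\in C_\pi(g)$ generated a cyclic group with $g$, then either $C_\pi(g)$ would be cyclic (contradicting the hypothesis) or $g$ would be infinitely divisible, which is excluded by Theorem \ref{thm:divisible}. This step must be made explicit, and it is the reason the divisibility theorem is needed here at all. (Your observation that the statement is awkward for $g=1$ is fair but orthogonal; the intended reading takes $g$ non-trivial.)
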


\begin{proof}
Let $N$ be a  3-manifold and let $g\in \pi=\pi_1(N)$. If for any $h\in C_\pi(g)$ the group generated by $g$ and $h$ is cyclic,
then either $C_\pi(g)$ is cyclic, or $g$ is infinitely divisible. Since the former case is excluded by
 Theorem \ref{thm:divisible} the latter case has to hold.

Now suppose that $C_\pi(g)$ is not cyclic and suppose that there exist an $h\in C_\pi(g)$ such that the group generated by $g$ and $h$ is not cyclic.
It follows from Theorem \ref{thm:commute} that one of the following three cases occurs:
 \bn
 \item there exists a JSJ torus $T$ such that $g$ lies in a conjugate of $\pi_1(T)\subset \pi_1(N)$,
 \item  there exists a Seifert fibered component $M$ of the JSJ decomposition  such that $g$ lies in a conjugate of $\pi_1(M)\subset \pi_1(N)$,
\en
First suppose there exists a JSJ torus $T$ such that $g$ lies in a conjugate of $\pi_1(T)\subset \pi_1(N)$.
Without loss of generality we can assume that $g\in \pi_1(T)$. We first consider the case that the two JSJ components abutting $T$ are different.
We denote these two components by $M_1$ and $M_2$. By Proposition \ref{prop:hypbasics} (5) the following claim implies the theorem in this case.

\begin{claim}
There exists an $i\in \{1,2\}$ such that
\[ C_{\pi}(g)=C_{\pi_1(M_i)}(g).\]
\end{claim}

Let $h\in C_\pi(g)$. It follows easily from the proof of Theorem \ref{thm:commute} that
either $h\in \pi_1(M_1)$ or $h\in \pi_1(M_2)$. If  $M_1$ is hyperbolic, then it follows from Lemma \ref{lem:hypdivisible} and from Proposition \ref{prop:hypbasics} (5)
that $h\in \pi_1(T)$. It follows that $ C_{\pi}(g)=C_{\pi_1(M_2)}(g)$. Similarly we deal with the case that $M_2$ is hyperbolic.
Finally assume that $M_1$ and $M_2$ are Seifert fibered. We denote by $c_1$ and $c_2$ the regular fibers of $M_1$ and $M_2$.
If $g$ is not a power of $c_1$, then it  follows from Lemma \ref{lem:centbdy} that $C_{\pi}(g)=C_{\pi_1(M_2)}(g)$, similarly if $g$ is not a power of $c_2$.
Recall that $c_1$ and $c_2$ are indivisible in $\pi_1(T)$ and that by Theorem \ref{thm:adjsfs} we have $c_1\ne c_2^{\pm 1}$.
It follows that $g$ is either not a power of $c_1$ or not a power of $c_2$.

The case that the torus is non-separating can be dealt with similarly. We leave this to the reader.
Also, if  there exists a Seifert fibered component $M$ of the JSJ decomposition  such that $g$ lies in a conjugate of $\pi_1(M)\subset \pi_1(N)$
and such that $g$ does not lie in the image of a boundary torus, then it follows  easily from the proof of Theorem \ref{thm:commute} that
\[ C_{\pi}(g)=C_{\pi_1(M)}(g).\]

\end{proof}

%


\end{document}